\documentclass[twoside,11pt,reqno]{amsart}
\usepackage{amsmath,amsthm,amssymb,amstext,amsfonts,amscd}
\usepackage{graphicx}
\usepackage{multirow}

\setcounter{MaxMatrixCols}{10}

\newtheorem{theorem}{Theorem}

\newtheorem{definition}{Definition}

\newtheorem{remark}{Remark}

\setlength{\oddsidemargin}{10pt} \setlength{\evensidemargin}{10pt}
\setlength{\textwidth}{5.8in}
\pagestyle{myheadings}
\thispagestyle{empty}
\numberwithin{equation}{section}
\input{tcilatex}

\begin{document}
\title[ Some growth properties of entire and meromorphic functions.....]{%
{\small \ }$\left( p,q\right) $- th relative order and $\left( p,q\right) $-
th relative type based some growth analysis of entire and meromorphic
functions}
\author[Tanmay Biswas]{Tanmay Biswas}
\address{T. Biswas : Rajbari, Rabindrapalli, R. N. Tagore Road, P.O.-
Krishnagar, Dist-Nadia, PIN-\ 741101, West Bengal, India}
\email{tanmaybiswas\_math@rediffmail.com}
\keywords{{\small \ }$\left( p,q\right) ${\small -th relative order, }$%
\left( p,q\right) ${\small -th relative lower order, }$\left( p,q\right) $%
{\small - th relative type, }$\left( p,q\right) ${\small -th relative weak
type, entire function, meromorphic function, growth.}\\
\textit{AMS Subject Classification}\textbf{\ }$\left( 2010\right) $\textbf{\ 
}{\footnotesize : }$30D35,30D30,30D20$}

\begin{abstract}
{\small In this paper we wish to prove some results related to the growth
rates of entire and meromorphic functions on the basis of relative }$\left(
p,q\right) ${\small \ th order and relative }$\left( p,q\right) ${\small \
th type of a meromorphic function with respect to an entire function for any
two positive integers }$p${\small \ and }$q${\small .}
\end{abstract}

\maketitle

\section{\textbf{Introduction, Definitions and Notations}}

\qquad Let $f$ be an entire function defined in the open complex plane $%
\mathbb{C}
.$ The\ maximum modulus function $M_{f}\left( r\right) $ corresponding to $f$%
\ is defined on $\left\vert z\right\vert =r$ as $M_{f}\left( r\right) =%
\QTATOP{\max }{\left\vert z\right\vert =r}\left\vert f\left( z\right)
\right\vert $. If an entire function $f$ is non-constant then $M_{f}\left(
r\right) $ is strictly increasing and continuous and its inverse $%
M_{f}{}^{-1}:\left( \left\vert f\left( 0\right) \right\vert ,\infty \right)
\rightarrow \left( 0,\infty \right) $ exists and is such that $\underset{%
s\rightarrow \infty }{\lim }M_{f}^{-1}\left( s\right) =\infty .$ When $f$ is
meromorphic, one may introduce another function $T_{f}\left( r\right) $
known as Nevanlinna's characteristic function of $f,$ playing the same role
as $M_{f}\left( r\right) .$

\qquad The integrated counting function $N_{f}\left( r,a\right) \left( 
\overline{N}_{f}\left( r,a\right) \right) $ of $a$-points (distinct $a$%
-points) of $f$ is defined as%
\begin{equation*}
N_{f}\left( r,a\right) =\overset{r}{\underset{0}{\int }}\frac{n_{f}\left(
t,a\right) -n_{f}\left( 0,a\right) }{t}dt+n_{f}\left( 0,a\right) \log r
\end{equation*}%
\begin{equation*}
\left( \overline{N}_{f}\left( r,a\right) =\overset{r}{\underset{0}{\int }}%
\frac{\overline{n}_{f}\left( t,a\right) -\overline{n}_{f}\left( r,a\right) }{%
t}dt+\overline{n}_{f}\left( 0,a\right) \log r\right) ~,
\end{equation*}%
where we denote by $n_{f}\left( t,a\right) \left( \overline{n}_{f}\left(
t,a\right) \right) $ the number of $a$-points (distinct $a$-points) of $f$
in $\left\vert z\right\vert \leq t$ and an $\infty $ -point is a pole of $f$
. In many occasions $N_{f}\left( r,\infty \right) $ and $\overline{N}%
_{f}\left( r,\infty \right) $ are denoted by $N_{f}\left( r\right) $ and $%
\overline{N}_{f}\left( r\right) $ respectively. The function $N_{f}\left(
r,a\right) $ is called the enumerative function. On the other hand, the
function $m_{f}\left( r\right) \equiv m_{f}\left( r,\infty \right) $ known
as the proximity function is defined as%
\begin{align*}
m_{f}\left( r\right) & =\frac{1}{2\pi }\overset{2\pi }{\underset{0}{\int }}%
\log ^{+}\left\vert f\left( re^{i\theta }\right) \right\vert d\theta , \\
\text{where }\log ^{+}x& =\max \left( \log x,0\right) \text{ for all }%
x\geqslant 0
\end{align*}%
and an $\infty $ -point is a pole of $f$ .

\qquad Analogously, $m_{\frac{1}{f-a}}\left( r\right) \equiv m_{f}\left(
r,a\right) $ is defined when $a$ is not an $\infty $-point of $f.$

\qquad Thus the Nevanlinna's characteristic function $T_{f}\left( r\right) $
corresponding to $f$ is defined as%
\begin{equation*}
T_{f}\left( r\right) =N_{f}\left( r\right) +m_{f}\left( r\right) ~.
\end{equation*}

\qquad When $f$ is entire, $T_{f}\left( r\right) $ coincides with $%
m_{f}\left( r\right) $ as $N_{f}\left( r\right) =0.$

\qquad Moreover, if $f$ is non-constant entire then $T_{f}\left( r\right) $
is strictly increasing and continuous functions of $r$. Also its inverse $%
T_{f}^{-1}:\left( T_{f}\left( 0\right) ,\infty \right) \rightarrow \left(
0,\infty \right) $ exist and is such that $\underset{s\rightarrow \infty }{%
\lim }T_{f}^{-1}\left( s\right) =\infty $. Also the ratio $\frac{T_{f}\left(
r\right) }{T_{g}\left( r\right) }$ as $r\rightarrow \infty $ is called the
growth of $f$ with respect to $g$ in terms of the Nevanlinna's
Characteristic functions of the meromorphic functions $f$ and $g$.

\qquad Now we state the following notation which will be needed in the
sequel:

\begin{align*}
\log ^{[k]}x& =\log \left( \log ^{[k-1]}x\right) \text{ for }k=1,2,3,\cdot
\cdot \cdot \text{ and} \\
\log ^{[0]}x& =x;
\end{align*}%
and%
\begin{align*}
\exp ^{[k]}x& =\exp \left( \exp ^{[k-1]}x\right) \text{ for }k=1,2,3,\cdot
\cdot \cdot \text{ and} \\
\exp ^{[0]}x& =x.
\end{align*}

\qquad Taking this into account, Juneja, Kapoor and Bajpai \cite{5} defined
the $(p,q)$-th order and $(p,q)$-th lower order of an entire function $f$
respectively as follows:

\begin{equation*}
\rho _{f}\left( p,q\right) =\underset{r\rightarrow \infty }{\lim \sup }\frac{%
\log ^{[p]}M_{f}(r)}{\log ^{\left[ q\right] }r}\text{ and }\lambda
_{f}\left( p,q\right) =\underset{r\rightarrow \infty }{\lim \inf }\frac{\log
^{[p]}M_{f}(r)}{\log ^{\left[ q\right] }r},
\end{equation*}%
where $p,q$ are any two positive integers with $p\geq q$. When $f$ is
meromorphic one can easily verify that%
\begin{equation*}
\rho _{f}\left( p,q\right) =\underset{r\rightarrow \infty }{\lim \sup }\frac{%
\log ^{[p-1]}T_{f}(r)}{\log ^{\left[ q\right] }r}\text{ and }\lambda
_{f}\left( p,q\right) =\underset{r\rightarrow \infty }{\lim \inf }\frac{\log
^{[p-1]}T_{f}(r)}{\log ^{\left[ q\right] }r},
\end{equation*}%
where $p,q$ are any two positive integers with $p\geq q$.If $p=l$ and $q=1$
then we write $\rho _{f}\left( l,1\right) =\rho _{f}^{\left[ l\right] }$ and 
$\lambda _{f}\left( l,1\right) =\lambda _{f}^{\left[ l\right] }$ where $\rho
_{f}^{\left[ l\right] }$ and $\lambda _{f}^{\left[ l\right] }$ are
respectively known as generalized order and generalized lower order of $f$.
Also for $p=2$ and $q=1$ we respectively denote $\rho _{f}\left( 2,1\right) $
and $\lambda _{f}\left( 2,1\right) $ by $\rho _{f}$ and $\lambda _{f}.$
where $\rho _{f}$ and $\lambda _{f}$ are the classical growth indicator
known as order and lower order of $f$.

\qquad In this connection we just recall the following definition :

\begin{definition}
\label{d1}\cite{5} An entire function $f$ is said to have index-pair $\left(
p,q\right) $, $p\geq q\geq 1$ if $b<\rho _{f}\left( p,q\right) <\infty $ and 
$\rho _{f}\left( p-1,q-1\right) $ is not a nonzero finite number, where $b=1$
if $p=q$ and $b=0$ if $p>q.$ Moreover if $0<\rho _{f}\left( p,q\right)
<\infty ,$ then%
\begin{equation*}
\rho _{f}\left( p-n,q\right) =\infty ~\text{for }n<p,~\rho _{f}\left(
p,q-n\right) =0~\text{for }n<q\text{ and}
\end{equation*}%
\begin{equation*}
\rho _{f}\left( p+n,q+n\right) =1~\text{for }n=1,2,....~.
\end{equation*}%
Similarly for $0<\lambda _{f}\left( p,q\right) <\infty ,$ one can easily
verify that%
\begin{equation*}
\lambda _{f}\left( p-n,q\right) =\infty ~\text{for }n<p,~\lambda _{f}\left(
p,q-n\right) =0~\text{for }n<q\text{ and}
\end{equation*}%
\begin{equation*}
\lambda _{f}\left( p+n,q+n\right) =1~\text{for }n=1,2,....~.
\end{equation*}%
An entire function for which $(p,q)$-th order and $(p,q)$-th lower order are
the same is said to be of regular $\left( p,q\right) $-growth. Functions
which are not of regular $\left( p,q\right) $-growth are said to be of
irregular $\left( p,q\right) $-growth.
\end{definition}

\qquad Analogously one can easily verify that the Definition \ref{d1} of
index-pair can also be applicable for a meromorphic function $f$.

\qquad In order to compare the growth of entire functions having the same $%
(p,q)$-th order, Juneja, Kapoor and Bajpai \cite{Juneja2} also introduced
the concepts of $(p,q)$-th type and\ $(p,q)$-th lower type in the following
manner :

\begin{definition}
\label{d2}\cite{Juneja2} The $\left( p,q\right) $-th type and the $\left(
p,q\right) $-th lower type of entire function $f$ having finite positive $%
\left( p,q\right) $-th order $\rho _{f}\left( p,q\right) $ $(b$ $<$ $\rho
_{f}\left( p,q\right) $ $<$ $\infty )$ ($p,q$ are any two positive integers, 
$b=1$ if $p=q$ and $b=0$ for $p>q)$ are defined as :%
\begin{equation*}
\sigma _{f}\left( p,q\right) =\underset{r\rightarrow \infty }{\lim \sup }%
\frac{\log ^{\left[ p-1\right] }M_{f}\left( r\right) }{\left( \log ^{\left[
q-1\right] }r\right) ^{\rho _{f}\left( p,q\right) }}\text{\ and \ }\overline{%
\sigma }_{f}\left( p,q\right) =\underset{r\rightarrow \infty }{\lim \inf }%
\frac{\log ^{\left[ p-1\right] }M_{f}\left( r\right) }{\left( \log ^{\left[
q-1\right] }r\right) ^{\rho _{f}\left( p,q\right) }}\text{,}
\end{equation*}%
\begin{equation*}
~\ \ \ \ \ \ \ \ \ \ \ \ \ \ \ \ \ \ \ \ \ \ \ \ \ \ \ \ \ \ \ \ \ \ \ \ \ \
\ \ \ \ \ \ \ \ \ \ \ \ \ \ \ \ \ \ \ \ \ \ \ \ \ \ \ \ \ \text{ }0\leq 
\overline{\sigma }_{f}\left( p,q\right) \leq \sigma _{f}\left( p,q\right)
\leq \infty \text{~.}
\end{equation*}%
When $f$ is meromorphic one can easily verify that%
\begin{equation*}
\sigma _{f}\left( p,q\right) =\underset{r\rightarrow \infty }{\lim \sup }%
\frac{\log ^{\left[ p-2\right] }T_{f}\left( r\right) }{\left( \log ^{\left[
q-1\right] }r\right) ^{\rho _{f}\left( p,q\right) }}\text{\ and \ }\overline{%
\sigma }_{f}\left( p,q\right) =\underset{r\rightarrow \infty }{\lim \inf }%
\frac{\log ^{\left[ p-2\right] }T_{f}\left( r\right) }{\left( \log ^{\left[
q-1\right] }r\right) ^{\rho _{f}\left( p,q\right) }}\text{,}
\end{equation*}%
\begin{equation*}
~\ \ \ \ \ \ \ \ \ \ \ \ \ \ \ \ \ \ \ \ \ \ \ \ \ \ \ \ \ \ \ \ \ \ \ \ \ \
\ \ \ \ \ \ \ \ \ \ \ \ \ \ \ \ \ \ \ \ \ \ \ \ \ \ \ \ \ \text{ }0\leq 
\overline{\sigma }_{f}\left( p,q\right) \leq \sigma _{f}\left( p,q\right)
\leq \infty \text{~.}
\end{equation*}
\end{definition}

\qquad Likewise, to compare the growth of entire functions having the same $%
(p,q)$-th lower order, one can also introduced the concepts of $(p,q)$-th
weak type in the following manner :

\begin{definition}
\label{d3} The $\left( p,q\right) $ th weak type of entire function $f$
having finite positive $\left( p,q\right) $ th tower order $\lambda
_{f}\left( p,q\right) $ $(b$ $<$ $\lambda _{f}\left( p,q\right) $ $<$ $%
\infty )$ is defined as :%
\begin{equation*}
\tau _{f}\left( p,q\right) =\underset{r\rightarrow \infty }{\lim \inf }\frac{%
\log ^{\left[ p-1\right] }M_{f}\left( r\right) }{\left( \log ^{\left[ q-1%
\right] }r\right) ^{\lambda _{f}\left( p,q\right) }}\text{~}
\end{equation*}%
where $p,q$ are any two positive integers, $b=1$ if $p=q$ and $b=0$ for $%
p>q~.$

Similarly one may define the growth indicator $\overline{\tau }_{f}\left(
p,q\right) $ of an entire function $f$ in the following way :%
\begin{equation*}
\overline{\tau }_{f}\left( p,q\right) =\underset{r\rightarrow \infty }{\lim
\sup }\frac{\log ^{\left[ p-1\right] }M_{f}\left( r\right) }{\left( \log ^{%
\left[ q-1\right] }r\right) ^{\lambda _{f}\left( p,q\right) }}\text{ },\text{
}b<\lambda _{f}\left( p,q\right) <\infty
\end{equation*}%
where $p,q$ are any two positive integers, $b=1$ if $p=q$ and $b=0$ for $%
p>q~.$\newline
When $f$ is meromorphic one can easily verify that%
\begin{eqnarray*}
\tau _{f}\left( p,q\right) &=&\underset{r\rightarrow \infty }{\lim \inf }%
\frac{\log ^{\left[ p-2\right] }M_{f}\left( r\right) }{\left( \log ^{\left[
q-1\right] }r\right) ^{\lambda _{f}\left( p,q\right) }}\text{\ and } \\
\text{\ }\overline{\tau }_{f}\left( p,q\right) &=&\underset{r\rightarrow
\infty }{\lim \sup }\frac{\log ^{\left[ p-2\right] }M_{f}\left( r\right) }{%
\left( \log ^{\left[ q-1\right] }r\right) ^{\lambda _{f}\left( p,q\right) }}%
\text{ },\text{ }b<\lambda _{f}\left( p,q\right) <\infty \text{,}
\end{eqnarray*}

It is obvious that $0\leq \tau _{f}\left( p,q\right) \leq \overline{\tau }%
_{f}\left( p,q\right) \leq \infty $~$.$
\end{definition}

\qquad Given a non-constant entire function $f$ defined in the open complex
plane $%
\mathbb{C}
$ its maximum modulus function and Nevanlinna's characteristic function are
strictly increasing and continuous. Hence there exists its inverse functions 
$M_{f}^{-1}(r):\left( \left\vert f\left( 0\right) \right\vert ,\infty
\right) \rightarrow \left( 0,\infty \right) $ with $\underset{s\rightarrow
\infty }{\lim }M_{f}^{-1}\left( s\right) =\infty $ and $T_{f}^{-1}(r):\left(
\left\vert f\left( 0\right) \right\vert ,\infty \right) \rightarrow \left(
0,\infty \right) $ with $\underset{s\rightarrow \infty }{\lim }%
T_{f}^{-1}\left( s\right) =\infty .$

\qquad In this connection, Bernal \cite{1} introduced the definition of
relative order of an entire function $f$ with respect to another entire
function $g$, denoted by $\rho _{g}\left( f\right) $ as follows:%
\begin{eqnarray*}
\rho _{g}\left( f\right) &=&\inf \left\{ \mu >0:M_{f}\left( r\right)
<M_{g}\left( r^{\mu }\right) \text{ for all }r>r_{0}\left( \mu \right)
>0.\right\} \\
&=&\underset{r\rightarrow \infty }{\lim \sup }\frac{\log
M_{g}^{-1}M_{f}\left( r\right) }{\log r}~.
\end{eqnarray*}

\qquad The definition coincides with the classical one \cite{11} if $g\left(
z\right) =\exp z.$ Similarly one can define the relative lower order of an
entire function $f$ with respect to another entire function $g$ denoted by $%
\lambda _{g}\left( f\right) $ as follows :%
\begin{equation*}
\lambda _{g}\left( f\right) =\underset{r\rightarrow \infty }{\lim \inf }%
\frac{\log M_{g}^{-1}M_{f}\left( r\right) }{\log r}~.
\end{equation*}

\qquad Extending this notion, Ruiz et. al. \cite{xx} introduced the
definition of $\left( p,q\right) $ th relative order of a entire function
with respect to an entire function in the light of index pair which is as
follows :

\begin{definition}
\label{d4} \cite{xx}\textbf{\ }Let $f$ and $g$ be any two entire functions
with index-pairs $\left( m,q\right) $ and $\left( m,p\right) $ respectively
where $p,q,m$ are positive integers such that $m\geq \max (p,q).$ Then the
relative $\left( p,q\right) $-th order of $f$ with respect to $g$ is defined
as 
\begin{equation*}
\rho _{g}^{\left( p,q\right) }\left( f\right) =\text{ }\underset{%
r\rightarrow \infty }{\lim \sup }\frac{\log ^{\left[ p\right]
}M_{g}^{-1}M_{f}\left( r\right) }{\log ^{\left[ q\right] }r}.
\end{equation*}%
Analogously, the relative $\left( p,q\right) $-th lower order of $f$ with
respect to $g$ is defined by: 
\begin{equation*}
\lambda _{g}^{\left( p,q\right) }\left( f\right) =\underset{r\rightarrow
\infty }{\text{ }\lim \inf }\frac{\log ^{\left[ p\right] }M_{g}^{-1}M_{f}%
\left( r\right) }{\log ^{\left[ q\right] }r}.
\end{equation*}
\end{definition}

\qquad In order to refine the above growth scale, now we intend to introduce
the definitions of an another growth indicators, such as relative $\left(
p,q\right) $\ -th type and relative $\left( p,q\right) $\ -th lower type%
\emph{\ }of entire function with respect to another entire function in the
light of their index-pair which are as follows:

\begin{definition}
\label{d5} Let $f$ and $g$ be any two entire functions with index-pairs $%
\left( m_{1},q\right) $ and $\left( m_{2},p\right) $ respectively where $%
m_{1}=m_{2}=m$ and $p,q,m$ are all positive integers such that $m\geq \max
\left\{ p,q\right\} .$ The relative $\left( p,q\right) $\ -th type and
relative $\left( p,q\right) $\ -th lower type of entire function $f$ with
respect to the entire function $g$ having finite positive relative $\left(
p,q\right) $ th order $\rho _{g}^{\left( p,q\right) }\left( f\right) $ $%
\left( 0<\rho _{g}^{\left( p,q\right) }\left( f\right) <\infty \right) $ are
defined as :%
\begin{equation*}
\sigma _{g}^{\left( p,q\right) }\left( f\right) =\underset{r\rightarrow
\infty }{\lim \sup }\frac{\log ^{\left[ p-1\right] }M_{g}^{-1}M_{f}\left(
r\right) }{\left( \log ^{\left[ q-1\right] }r\right) ^{\rho _{g}^{\left(
p,q\right) }\left( f\right) }}\text{ \ \ and \ \ }\overline{\sigma }%
_{g}^{\left( p,q\right) }\left( f\right) =\underset{r\rightarrow \infty }{%
\lim \inf }\frac{\log ^{\left[ p-1\right] }M_{g}^{-1}M_{f}\left( r\right) }{%
\left( \log ^{\left[ q-1\right] }r\right) ^{\rho _{g}^{\left( p,q\right)
}\left( f\right) }}
\end{equation*}
\end{definition}

\qquad Analogously, to determine the relative growth of two entire functions
having same non zero finite relative $\left( p,q\right) $\ -th lower order
with respect to another entire function, one can introduced the definition
of relative $\left( p,q\right) $\ -th weak type of an entire function $f$
with respect to another entire function $g$ of finite positive relative $%
\left( p,q\right) $\ -th lower order $\lambda _{g}^{\left( p,q\right)
}\left( f\right) $ in the following way:

\begin{definition}
\label{d6} Let $f$ and $g$ be any two entire functions having finite
positive relative $\left( p,q\right) $ th lower order $\lambda _{g}^{\left(
p,q\right) }\left( f\right) $ $\left( 0<\lambda _{g}^{\left( p,q\right)
}\left( f\right) <\infty \right) $ where $p$\ and $q$ are any two positive
integers. Then the \emph{relative }$\left( p,q\right) $\emph{\ th weak type}
of entire function $f$ with respect to the entire function $g$ is defined as
:%
\begin{equation*}
\tau _{g}^{\left( p,q\right) }\left( f\right) =\underset{r\rightarrow \infty 
}{\lim \inf }\frac{\log ^{\left[ p-1\right] }M_{g}^{-1}M_{f}\left( r\right) 
}{\left( \log ^{\left[ q-1\right] }r\right) ^{\lambda _{g}^{\left(
p,q\right) }\left( f\right) }}~.
\end{equation*}%
Similarly one can define another growth indicator $\overline{\tau }%
_{g}^{\left( p,q\right) }\left( f\right) $ in the following way:%
\begin{equation*}
\overline{\tau }_{g}^{\left( p,q\right) }\left( f\right) =\underset{%
r\rightarrow \infty }{\lim \sup }\frac{\log ^{\left[ p-1\right]
}M_{g}^{-1}M_{f}\left( r\right) }{\left( \log ^{\left[ q-1\right] }r\right)
^{\lambda _{g}^{\left( p,q\right) }\left( f\right) }}~.
\end{equation*}
\end{definition}

\qquad In the case of relative order, it therefore seems reasonable to
define suitably the relative $\left( p,q\right) $ th order of meromorphic
functions. Debnath et. al. \cite{yy} also introduced such definition in the
light of index pair in the following manner:

\begin{definition}
\label{d7}\cite{yy} Let $f$ be any meromorphic function and $g$ be any
entire function with index-pairs $\left( m_{1},q\right) $ and $\left(
m_{2},p\right) $ respectively where $m_{1}=m_{2}=m$ and $p,q,m$ are all
positive integers such that $m\geq p$ and $m\geq q.$ Then the relative $%
\left( p,q\right) $ th order of $f$ with respect to $g$ is defined as%
\begin{equation*}
\rho _{g}^{\left( p,q\right) }\left( f\right) =\underset{r\rightarrow \infty 
}{\lim \sup }\frac{\log ^{\left[ p\right] }T_{g}^{-1}T_{f}\left( r\right) }{%
\log ^{\left[ q\right] }r}.
\end{equation*}

Similarly, one can define the relative $\left( p,q\right) $ th lower order
of a meromorphic function $f$ with respect to an entire function $g$ denoted
by $\lambda _{g}^{\left( p,q\right) }\left( f\right) $ where $p$ and $q$ are
any two positive integers in the following way:%
\begin{equation*}
\lambda _{g}^{\left( p,q\right) }\left( f\right) =\underset{r\rightarrow
\infty }{\lim \inf }\frac{\log ^{\left[ p\right] }T_{g}^{-1}T_{f}\left(
r\right) }{\log ^{\left[ q\right] }r}.
\end{equation*}
\end{definition}

\qquad Now we state the following two definitions relating to the
meromorphic function which are similar to Definition \ref{d6} and Definition %
\ref{d7} respectively.

\begin{definition}
\label{d8} Let $f$ be a meromorphic function and $g$ be an entire function
with index-pairs $\left( m_{1},q\right) $ and $\left( m_{2},p\right) $
respectively where $m_{1}=m_{2}=m$ and $p,q,m$ are all positive integers
such that $m\geq \max \left\{ p,q\right\} .$ The relative $\left( p,q\right) 
$\ -th type and relative $\left( p,q\right) $\ -th lower type of meromorphic
function $f$ with respect to the entire function $g$ having finite positive
relative $\left( p,q\right) $ th order $\rho _{g}^{\left( p,q\right) }\left(
f\right) $ $\left( 0<\rho _{g}^{\left( p,q\right) }\left( f\right) <\infty
\right) $ are defined as :%
\begin{equation*}
\sigma _{g}^{\left( p,q\right) }\left( f\right) =\underset{r\rightarrow
\infty }{\lim \sup }\frac{\log ^{\left[ p-1\right] }T_{g}^{-1}T_{f}\left(
r\right) }{\left( \log ^{\left[ q-1\right] }r\right) ^{\rho _{g}^{\left(
p,q\right) }\left( f\right) }}\text{ \ \ and \ \ }\overline{\sigma }%
_{g}^{\left( p,q\right) }\left( f\right) =\underset{r\rightarrow \infty }{%
\lim \inf }\frac{\log ^{\left[ p-1\right] }T_{g}^{-1}T_{f}\left( r\right) }{%
\left( \log ^{\left[ q-1\right] }r\right) ^{\rho _{g}^{\left( p,q\right)
}\left( f\right) }}
\end{equation*}
\end{definition}

\begin{definition}
\label{d9} Let $f$ be a meromorphic function and $g$ be an entire function
having finite positive relative $\left( p,q\right) $ th lower order $\lambda
_{g}^{\left( p,q\right) }\left( f\right) $ $\left( 0<\lambda _{g}^{\left(
p,q\right) }\left( f\right) <\infty \right) $ where $p$\ and $q$ are any two
positive integers. Then the \emph{relative }$\left( p,q\right) $\emph{\ th
weak type} of meromorphic function $f$ with respect to the entire function $%
g $ is defined as :%
\begin{equation*}
\tau _{g}^{\left( p,q\right) }\left( f\right) =\underset{r\rightarrow \infty 
}{\lim \inf }\frac{\log ^{\left[ p-1\right] }T_{g}^{-1}T_{f}\left( r\right) 
}{\left( \log ^{\left[ q-1\right] }r\right) ^{\lambda _{g}^{\left(
p,q\right) }\left( f\right) }}~.
\end{equation*}%
Similarly one can define another growth indicator $\overline{\tau }%
_{g}^{\left( p,q\right) }\left( f\right) $ in the following way:%
\begin{equation*}
\overline{\tau }_{g}^{\left( p,q\right) }\left( f\right) =\underset{%
r\rightarrow \infty }{\lim \sup }\frac{\log ^{\left[ p-1\right]
}T_{g}^{-1}T_{f}\left( r\right) }{\left( \log ^{\left[ q-1\right] }r\right)
^{\lambda _{g}^{\left( p,q\right) }\left( f\right) }}~.
\end{equation*}
\end{definition}

\qquad In this paper we wish to prove some results related to the growth
rates of entire and meromorphic functions on the basis of relative $\left(
p,q\right) $ th order and relative $\left( p,q\right) $ th type of a
meromorphic function with respect to an entire function for any two positive
integers $p$ and $q$. We use the standard notations and definitions of the
theory of entire and meromorphic functions which are available in \cite{4}
and \cite{12}$.$

\section{\textbf{Main Results}}

\qquad In this section we present some results which will be needed in the
sequel.

\begin{theorem}
\label{t1} Let $f$, $g$ be any two meromorphic functions and $h,$ $k$ be any
two entire functions such that $0<\lambda _{h}^{\left( m,q\right) }\left(
f\right) \leq \rho _{h}^{\left( m,q\right) }\left( f\right) <\infty $ and $%
0<\lambda _{k}^{\left( n,q\right) }\left( g\right) \leq \rho _{k}^{\left(
n,q\right) }\left( g\right) <\infty $ where $m,n$ and $p$ are any three
positive integers. Then%
\begin{multline*}
\frac{\lambda _{h}^{\left( m,q\right) }\left( f\right) }{\rho _{k}^{\left(
n,q\right) }\left( g\right) }\leq \underset{r\rightarrow +\infty }{%
\underline{\lim }}\frac{\log ^{\left[ m\right] }T_{h}^{-1}T_{f}\left(
r\right) }{\log ^{\left[ n\right] }T_{k}^{-1}T_{g}\left( r\right) }\leq 
\frac{\lambda _{h}^{\left( m,q\right) }\left( f\right) }{\lambda
_{k}^{\left( n,q\right) }\left( g\right) } \\
\leq \underset{r\rightarrow +\infty }{\overline{\lim }}\frac{\log ^{\left[ m%
\right] }T_{h}^{-1}T_{f}\left( r\right) }{\log ^{\left[ n\right]
}T_{k}^{-1}T_{g}\left( r\right) }\leq \frac{\rho _{h}^{\left( m,q\right)
}\left( f\right) }{\lambda _{k}^{\left( n,q\right) }\left( g\right) }~.
\end{multline*}
\end{theorem}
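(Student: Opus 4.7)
The plan is to prove each of the four inequalities in the chain by extracting $\varepsilon$-estimates from the four definitions (upper and lower order of $f$ with respect to $h$, upper and lower order of $g$ with respect to $k$), then dividing numerator-type bounds by denominator-type bounds in the right pairings. Throughout, $\varepsilon>0$ is arbitrary and will be sent to $0$ at the end of each argument.

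From Definition \ref{d7} applied to $f/h$ and to $g/k$, for every $\varepsilon>0$ there exists $r_{0}=r_{0}(\varepsilon)$ such that for all $r\geq r_{0}$ one has the two-sided ``everywhere'' bounds
\begin{equation*}
\bigl(\lambda _{h}^{(m,q)}(f)-\varepsilon \bigr)\log ^{[q]}r \;\leq\; \log ^{[m]}T_{h}^{-1}T_{f}(r) \;\leq\; \bigl(\rho _{h}^{(m,q)}(f)+\varepsilon \bigr)\log ^{[q]}r,
\end{equation*}
\begin{equation*}
\bigl(\lambda _{k}^{(n,q)}(g)-\varepsilon \bigr)\log ^{[q]}r \;\leq\; \log ^{[n]}T_{k}^{-1}T_{g}(r) \;\leq\; \bigl(\rho _{k}^{(n,q)}(g)+\varepsilon \bigr)\log ^{[q]}r .
\end{equation*}
In addition, by the very meaning of $\liminf$ and $\limsup$, each of the four indicators is attained up to $\varepsilon$ along a suitable sequence $r_{n}\rightarrow \infty $: there exist sequences on which $\log ^{[m]}T_{h}^{-1}T_{f}(r_{n})\leq (\lambda _{h}^{(m,q)}(f)+\varepsilon )\log ^{[q]}r_{n}$, another on which $\log ^{[m]}T_{h}^{-1}T_{f}(r_{n})\geq (\rho _{h}^{(m,q)}(f)-\varepsilon )\log ^{[q]}r_{n}$, and likewise for $g$ with $k$.

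For the outer inequalities $\lambda _{h}^{(m,q)}(f)/\rho _{k}^{(n,q)}(g)\leq \underline{\lim }$ and $\overline{\lim }\leq \rho _{h}^{(m,q)}(f)/\lambda _{k}^{(n,q)}(g)$ only the ``everywhere'' bounds are needed: divide a lower bound on the numerator by an upper bound on the denominator (respectively, upper over lower), obtaining the quotient $(\lambda _{h}-\varepsilon )/(\rho _{k}+\varepsilon )$ for all large $r$ (resp.\ $(\rho _{h}+\varepsilon )/(\lambda _{k}-\varepsilon )$), then pass to the $\liminf$ (resp.\ $\limsup$) and let $\varepsilon \rightarrow 0$. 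For the middle inequality $\underline{\lim }\leq \lambda _{h}^{(m,q)}(f)/\lambda _{k}^{(n,q)}(g)$, evaluate the ratio along the sequence $r_{n}$ on which the numerator $\log ^{[m]}T_{h}^{-1}T_{f}(r_{n})$ is controlled from above by $(\lambda _{h}+\varepsilon )\log ^{[q]}r_{n}$, while bounding the denominator from below by $(\lambda _{k}-\varepsilon )\log ^{[q]}r_{n}$ via the ``everywhere'' bound; this forces the $\liminf$ to be at most $(\lambda _{h}+\varepsilon )/(\lambda _{k}-\varepsilon )$. Symmetrically, for $\lambda _{h}^{(m,q)}(f)/\lambda _{k}^{(n,q)}(g)\leq \overline{\lim }$, work along the sequence $r_{n}$ that realises $\lambda _{k}^{(n,q)}(g)$, so that the denominator is bounded above by $(\lambda _{k}+\varepsilon )\log ^{[q]}r_{n}$, while the numerator is bounded below everywhere by $(\lambda _{h}-\varepsilon )\log ^{[q]}r_{n}$; the ratio along this sequence is at least $(\lambda _{h}-\varepsilon )/(\lambda _{k}+\varepsilon )$, which gives the required lower bound on the $\limsup$.

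There is no serious obstacle here: the argument is a packaging of the standard ``$\varepsilon$-definition'' calculus for $\liminf$ and $\limsup$. The only place that requires a little care is the middle pair of inequalities, where one of the two factors in each ratio must be estimated along a specific sequence of $r$'s (the one realising the relevant $\liminf$), while the other is estimated uniformly for all large $r$; one must make sure not to require both bounds to be attained simultaneously, since that is generally false. Once the correct pairing of ``sequence bound'' and ``everywhere bound'' is chosen for each of the four inequalities, the proof reduces to taking $\varepsilon \rightarrow 0$ in elementary rational expressions.
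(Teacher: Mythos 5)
Your proposal is correct and follows essentially the same route as the paper: the two outer inequalities come from pairing "for all sufficiently large $r$" bounds on numerator and denominator, and the two middle inequalities come from pairing a sequence-realised bound on one factor with an everywhere bound on the other, exactly as in the paper's inequalities (\ref{5.11})--(\ref{5.20}). No gaps.
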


\begin{proof}
From the definitions of $\lambda _{h}^{\left( m,q\right) }\left( f\right) $
and $\rho _{k}^{\left( n,q\right) }\left( g\right) ,$ we get for arbitrary
positive $\varepsilon $ and for all sufficiently large values of $r$ that%
\begin{equation}
\log ^{\left[ m\right] }T_{h}^{-1}T_{f}\left( r\right) \geqslant \left(
\lambda _{h}^{\left( m,q\right) }\left( f\right) -\varepsilon \right) \log ^{%
\left[ q\right] }r  \label{5.11}
\end{equation}%
and%
\begin{equation}
\log ^{\left[ n\right] }T_{k}^{-1}T_{g}\left( r\right) \leq \left( \rho
_{k}^{\left( n,q\right) }\left( g\right) +\varepsilon \right) \log ^{\left[ q%
\right] }r~.  \label{5.12}
\end{equation}

\qquad Now from $\left( \ref{5.11}\right) $ and $\left( \ref{5.12}\right) ,$
it follows for all sufficiently large values of $r$ that%
\begin{equation*}
\frac{\log ^{\left[ m\right] }T_{h}^{-1}T_{f}\left( r\right) }{\log ^{\left[
n\right] }T_{k}^{-1}T_{g}\left( r\right) }\geqslant \frac{\left( \lambda
_{h}^{\left( m,q\right) }\left( f\right) -\varepsilon \right) \log ^{\left[ q%
\right] }r}{\left( \rho _{k}^{\left( n,q\right) }\left( g\right)
+\varepsilon \right) \log ^{\left[ q\right] }r}~.
\end{equation*}

\qquad As $\varepsilon \left( >0\right) $ is arbitrary, we obtain that%
\begin{equation}
\underset{r\rightarrow +\infty }{\underline{\lim }}\frac{\log ^{\left[ m%
\right] }T_{h}^{-1}T_{f}\left( r\right) }{\log ^{\left[ n\right]
}T_{k}^{-1}T_{g}\left( r\right) }\geqslant \frac{\lambda _{h}^{\left(
m,q\right) }\left( f\right) }{\rho _{k}^{\left( n,q\right) }\left( g\right) }%
~.  \label{5.13}
\end{equation}

\qquad Again for a sequence of values of $r$ tending to infinity, we get that%
\begin{equation}
\log ^{\left[ m\right] }T_{h}^{-1}T_{f}\left( r\right) \leq \left( \lambda
_{h}^{\left( m,q\right) }\left( f\right) +\varepsilon \right) \log ^{\left[ q%
\right] }r  \label{5.14}
\end{equation}%
and for all sufficiently large values of $r$,%
\begin{equation}
\log ^{\left[ n\right] }T_{k}^{-1}T_{g}\left( r\right) \geqslant \left(
\lambda _{k}^{\left( n,q\right) }\left( g\right) -\varepsilon \right) \log ^{%
\left[ q\right] }r~.  \label{5.15}
\end{equation}

\qquad Combining $\left( \ref{5.14}\right) $ and $\left( \ref{5.15}\right) ,$
we obtain for a sequence of values of $r$ tending to infinity that%
\begin{equation*}
\frac{\log ^{\left[ m\right] }T_{h}^{-1}T_{f}\left( r\right) }{\log ^{\left[
n\right] }T_{k}^{-1}T_{g}\left( r\right) }\leq \frac{\left( \lambda
_{h}^{\left( m,q\right) }\left( f\right) +\varepsilon \right) \log ^{\left[ q%
\right] }r}{\left( \lambda _{k}^{\left( n,q\right) }\left( g\right)
-\varepsilon \right) \log ^{\left[ q\right] }r}~.
\end{equation*}

\qquad Since $\varepsilon \left( >0\right) $ is arbitrary, it follows that%
\begin{equation}
\underset{r\rightarrow +\infty }{\underline{\lim }}\frac{\log ^{\left[ m%
\right] }T_{h}^{-1}T_{f}\left( r\right) }{\log ^{\left[ n\right]
}T_{k}^{-1}T_{g}\left( r\right) }\leq \frac{\lambda _{h}^{\left( m,q\right)
}\left( f\right) }{\lambda _{k}^{\left( n,q\right) }\left( g\right) }~.
\label{5.16}
\end{equation}

\qquad Also for a sequence of values of $r$ tending to infinity, we get that%
\begin{equation}
\log ^{\left[ n\right] }T_{k}^{-1}T_{g}\left( r\right) \leq \left( \lambda
_{k}^{\left( n,q\right) }\left( g\right) +\varepsilon \right) \log ^{\left[ q%
\right] }r~.  \label{5.17}
\end{equation}

\qquad Now from $\left( \ref{5.11}\right) $ and $\left( \ref{5.17}\right) ,$
we obtain for a sequence of values of $r$ tending to infinity that%
\begin{equation*}
\frac{\log ^{\left[ m\right] }T_{h}^{-1}T_{f}\left( r\right) }{\log ^{\left[
n\right] }T_{k}^{-1}T_{g}\left( r\right) }\geq \frac{\left( \lambda
_{h}^{\left( m,q\right) }\left( f\right) -\varepsilon \right) \log ^{\left[ q%
\right] }r}{\left( \lambda _{k}^{\left( n,q\right) }\left( g\right)
+\varepsilon \right) \log ^{\left[ q\right] }r}~.
\end{equation*}

\qquad As $\varepsilon \left( >0\right) $ is arbitrary, we get from above
that%
\begin{equation}
\underset{r\rightarrow +\infty }{\overline{\lim }}\frac{\log ^{\left[ m%
\right] }T_{h}^{-1}T_{f}\left( r\right) }{\log ^{\left[ n\right]
}T_{k}^{-1}T_{g}\left( r\right) }\geq \frac{\lambda _{h}^{\left( m,q\right)
}\left( f\right) }{\lambda _{k}^{\left( n,q\right) }\left( g\right) }~.
\label{5.18}
\end{equation}

\qquad Also for all sufficiently large values of $r$,%
\begin{equation}
\log ^{\left[ m\right] }T_{h}^{-1}T_{f}\left( r\right) \leq \left( \rho
_{h}^{\left( m,q\right) }\left( f\right) +\varepsilon \right) \log ^{\left[ q%
\right] }r~.  \label{5.19}
\end{equation}

\qquad So it follows from $\left( \ref{5.15}\right) $ and $\left( \ref{5.19}%
\right) ,$ for all sufficiently large values of $r$ that%
\begin{equation*}
\frac{\log ^{\left[ m\right] }T_{h}^{-1}T_{f}\left( r\right) }{\log ^{\left[
n\right] }T_{k}^{-1}T_{g}\left( r\right) }\leq \frac{\left( \rho
_{h}^{\left( m,q\right) }\left( f\right) +\varepsilon \right) \log ^{\left[ q%
\right] }r}{\left( \lambda _{k}^{\left( n,q\right) }\left( g\right)
-\varepsilon \right) \log ^{\left[ q\right] }r}~.
\end{equation*}

\qquad Since $\varepsilon \left( >0\right) $ is arbitrary, we obtain that%
\begin{equation}
\underset{r\rightarrow +\infty }{\overline{\lim }}\frac{\log ^{\left[ m%
\right] }T_{h}^{-1}T_{f}\left( r\right) }{\log ^{\left[ n\right]
}T_{k}^{-1}T_{g}\left( r\right) }\leq \frac{\rho _{h}^{\left( m,q\right)
}\left( f\right) }{\lambda _{k}^{\left( n,q\right) }\left( g\right) }~.
\label{5.20}
\end{equation}%
Thus the theorem follows from $\left( \ref{5.13}\right) ,\left( \ref{5.16}%
\right) ,\left( \ref{5.18}\right) $ and $\left( \ref{5.20}\right) .$
\end{proof}

\begin{theorem}
\label{t3} Let $f$, $g$ be any two meromorphic functions and $h,$ $k$ be any
two entire functions such that $0<\rho _{h}^{\left( m,q\right) }\left(
f\right) <\infty $ and $0<\rho _{k}^{\left( n,q\right) }\left( g\right)
<\infty $ where $m,n$ and $p$ are any three positive integers. Then%
\begin{equation*}
\underset{r\rightarrow +\infty }{\underline{\lim }}\frac{\log ^{\left[ m%
\right] }T_{h}^{-1}T_{f}\left( r\right) }{\log ^{\left[ n\right]
}T_{k}^{-1}T_{g}\left( r\right) }\leq \frac{\rho _{h}^{\left( m,q\right)
}\left( f\right) }{\rho _{k}^{\left( n,q\right) }\left( g\right) }\leq 
\underset{r\rightarrow +\infty }{\overline{\lim }}\frac{\log ^{\left[ m%
\right] }T_{h}^{-1}T_{f}\left( r\right) }{\log ^{\left[ n\right]
}T_{k}^{-1}T_{g}\left( r\right) }~.
\end{equation*}
\end{theorem}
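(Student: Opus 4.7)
The plan is to mimic the four-inequality chain argument of Theorem \ref{t1}, but now reading off only the upper-order bounds. Because the hypothesis only controls $\rho_{h}^{(m,q)}(f)$ and $\rho_{k}^{(n,q)}(g)$, each numerator/denominator estimate splits into an ``all sufficiently large $r$'' statement and a ``sequence of $r$ tending to infinity'' statement; these are then paired appropriately to produce a one-sided bound on $\underline{\lim}$ or $\overline{\lim}$.

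First, I would record the four basic inequalities that follow directly from the definitions of $\rho_{h}^{(m,q)}(f)$ and $\rho_{k}^{(n,q)}(g)$: for arbitrary $\varepsilon>0$, for all sufficiently large $r$,
\begin{equation*}
\log^{[m]}T_{h}^{-1}T_{f}(r)\le\bigl(\rho_{h}^{(m,q)}(f)+\varepsilon\bigr)\log^{[q]}r,\qquad \log^{[n]}T_{k}^{-1}T_{g}(r)\le\bigl(\rho_{k}^{(n,q)}(g)+\varepsilon\bigr)\log^{[q]}r,
\end{equation*}
and for some sequence $r_{j}\to\infty$ (one sequence for $f$, one for $g$),
\begin{equation*}
\log^{[m]}T_{h}^{-1}T_{f}(r_{j})\ge\bigl(\rho_{h}^{(m,q)}(f)-\varepsilon\bigr)\log^{[q]}r_{j},\qquad \log^{[n]}T_{k}^{-1}T_{g}(r_{j})\ge\bigl(\rho_{k}^{(n,q)}(g)-\varepsilon\bigr)\log^{[q]}r_{j}.
\end{equation*}

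For the right-hand inequality, I would combine the upper bound on $\log^{[n]}T_{k}^{-1}T_{g}(r)$ (valid for all large $r$) with the lower bound on $\log^{[m]}T_{h}^{-1}T_{f}(r_{j})$ (valid along the sequence realizing $\rho_{h}^{(m,q)}(f)$). Dividing gives
\begin{equation*}
\frac{\log^{[m]}T_{h}^{-1}T_{f}(r_{j})}{\log^{[n]}T_{k}^{-1}T_{g}(r_{j})}\ge\frac{\rho_{h}^{(m,q)}(f)-\varepsilon}{\rho_{k}^{(n,q)}(g)+\varepsilon}
\end{equation*}
along that sequence, whence $\overline{\lim}\ge(\rho_{h}^{(m,q)}(f)-\varepsilon)/(\rho_{k}^{(n,q)}(g)+\varepsilon)$; letting $\varepsilon\downarrow 0$ yields the desired bound. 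For the left-hand inequality I would symmetrically pair the upper bound on $\log^{[m]}T_{h}^{-1}T_{f}(r)$ (for all large $r$) with the lower bound on $\log^{[n]}T_{k}^{-1}T_{g}(r_{j})$ along the $g$-realizing sequence, producing
\begin{equation*}
\frac{\log^{[m]}T_{h}^{-1}T_{f}(r_{j})}{\log^{[n]}T_{k}^{-1}T_{g}(r_{j})}\le\frac{\rho_{h}^{(m,q)}(f)+\varepsilon}{\rho_{k}^{(n,q)}(g)-\varepsilon},
\end{equation*}
so that $\underline{\lim}\le(\rho_{h}^{(m,q)}(f)+\varepsilon)/(\rho_{k}^{(n,q)}(g)-\varepsilon)$, and again $\varepsilon\downarrow 0$ finishes the argument.

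There is no real obstacle: the hypothesis $\rho_{k}^{(n,q)}(g)>0$ makes the denominator $\rho_{k}^{(n,q)}(g)-\varepsilon$ positive once $\varepsilon$ is small, so the division is legitimate, and finiteness of both upper orders guarantees every quantity in the quotient is well defined for large $r$. The only subtlety worth flagging is that the two ``sequence'' statements above are used on \emph{different} sequences, so neither one-sided bound uses both sequence estimates simultaneously; each of the two inequalities in the theorem pairs one ``for all large $r$'' estimate with one ``along a subsequence'' estimate, which is exactly what pushes a $\overline{\lim}$ (resp.\ $\underline{\lim}$) in the desired direction.
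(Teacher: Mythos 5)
Your proof is correct and follows essentially the same route as the paper: for each of the two inequalities you pair one ``for all sufficiently large $r$'' bound with one ``along a sequence'' bound (the paper's inequalities (5.19) with (5.21), and (5.12) with (5.23)), and then let $\varepsilon \downarrow 0$. The subtlety you flag about using two different sequences is exactly how the paper's argument is structured as well.
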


\begin{proof}
From the definition of $\rho _{k}^{\left( n,q\right) }\left( g\right) ,$ we
get for a sequence of values of $r$ tending to infinity that%
\begin{equation}
\log ^{\left[ n\right] }T_{k}^{-1}T_{g}\left( r\right) \geqslant \left( \rho
_{k}^{\left( n,q\right) }\left( g\right) -\varepsilon \right) \log ^{\left[ q%
\right] }r~.  \label{5.21}
\end{equation}

\qquad Now from $\left( \ref{5.19}\right) $ and $\left( \ref{5.21}\right) ,$
we get for a sequence of values of $r$ tending to infinity that%
\begin{equation*}
\frac{\log ^{\left[ m\right] }T_{h}^{-1}T_{f}\left( r\right) }{\log ^{\left[
n\right] }T_{k}^{-1}T_{g}\left( r\right) }\leq \frac{\left( \rho
_{h}^{\left( m,q\right) }\left( f\right) +\varepsilon \right) \log ^{\left[ q%
\right] }r}{\left( \rho _{k}^{\left( n,q\right) }\left( g\right)
-\varepsilon \right) \log ^{\left[ q\right] }r}~.
\end{equation*}

\qquad As $\varepsilon \left( >0\right) $ is arbitrary, we obtain that%
\begin{equation}
\underset{r\rightarrow +\infty }{\underline{\lim }}\frac{\log ^{\left[ m%
\right] }T_{h}^{-1}T_{f}\left( r\right) }{\log ^{\left[ n\right]
}T_{k}^{-1}T_{g}\left( r\right) }\leq \frac{\rho _{h}^{\left( m,q\right)
}\left( f\right) }{\rho _{k}^{\left( n,q\right) }\left( g\right) }~.
\label{5.22}
\end{equation}

\qquad Also for a sequence of values of $r$ tending to infinity, we have%
\begin{equation}
\log ^{\left[ m\right] }T_{h}^{-1}T_{f}\left( r\right) \geqslant \left( \rho
_{h}^{\left( m,q\right) }\left( f\right) -\varepsilon \right) \log ^{\left[ q%
\right] }r~.  \label{5.23}
\end{equation}

\qquad So combining $\left( \ref{5.12}\right) $ and $\left( \ref{5.23}%
\right) ,$ we get for a sequence of values of $r$ tending to infinity that%
\begin{equation*}
\frac{\log ^{\left[ m\right] }T_{h}^{-1}T_{f}\left( r\right) }{\log ^{\left[
n\right] }T_{k}^{-1}T_{g}\left( r\right) }\geqslant \frac{\left( \rho
_{h}^{\left( m,q\right) }\left( f\right) -\varepsilon \right) \log ^{\left[ q%
\right] }r}{\left( \rho _{k}^{\left( n,q\right) }\left( g\right)
+\varepsilon \right) \log ^{\left[ q\right] }r}~.
\end{equation*}

\qquad Since $\varepsilon \left( >0\right) $ is arbitrary, it follows that%
\begin{equation}
\underset{r\rightarrow +\infty }{\overline{\lim }}\frac{\log ^{\left[ m%
\right] }T_{h}^{-1}T_{f}\left( r\right) }{\log ^{\left[ n\right]
}T_{k}^{-1}T_{g}\left( r\right) }\geqslant \frac{\rho _{h}^{\left(
m,q\right) }\left( f\right) }{\rho _{k}^{\left( n,q\right) }\left( g\right) }%
~.  \label{5.24}
\end{equation}

\qquad Thus the theorem follows from $\left( \ref{5.22}\right) $ and $\left( %
\ref{5.24}\right) .$
\end{proof}

\qquad The following theorem is a natural consequence of Theorem \ref{t1}
and Theorem \ref{t3}.

\begin{theorem}
\label{t5} Let $f$, $g$ be any two meromorphic functions and $h,$ $k$ be any
two entire functions such that $0<\lambda _{h}^{\left( m,q\right) }\left(
f\right) \leq \rho _{h}^{\left( m,q\right) }\left( f\right) <\infty $ and $%
0<\lambda _{k}^{\left( n,q\right) }\left( g\right) \leq \rho _{k}^{\left(
n,q\right) }\left( g\right) <\infty $ where $m,n$ and $p$ are any three
positive integers. Then%
\begin{multline*}
\underset{r\rightarrow +\infty }{\underline{\lim }}\frac{\log ^{\left[ m%
\right] }T_{h}^{-1}T_{f}\left( r\right) }{\log ^{\left[ n\right]
}T_{k}^{-1}T_{g}\left( r\right) }\leq \min \left\{ \frac{\lambda
_{h}^{\left( m,q\right) }\left( f\right) }{\lambda _{k}^{\left( n,q\right)
}\left( g\right) },\frac{\rho _{h}^{\left( m,q\right) }\left( f\right) }{%
\rho _{k}^{\left( n,q\right) }\left( g\right) }\right\} \\
\leq \max \left\{ \frac{\lambda _{h}^{\left( m,q\right) }\left( f\right) }{%
\lambda _{k}^{\left( n,q\right) }\left( g\right) },\frac{\rho _{h}^{\left(
m,q\right) }\left( f\right) }{\rho _{k}^{\left( n,q\right) }\left( g\right) }%
\right\} \leq \underset{r\rightarrow +\infty }{\overline{\lim }}\frac{\log ^{%
\left[ m\right] }T_{h}^{-1}T_{f}\left( r\right) }{\log ^{\left[ n\right]
}T_{k}^{-1}T_{g}\left( r\right) }~.
\end{multline*}
\end{theorem}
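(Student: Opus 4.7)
The plan is to derive Theorem~\ref{t5} as a direct bookkeeping consequence of Theorems~\ref{t1} and \ref{t3}, with no further analysis of the functions $T_{h}^{-1}T_{f}$ and $T_{k}^{-1}T_{g}$ required. First I would verify that the hypotheses of Theorem~\ref{t5}, namely $0<\lambda_{h}^{(m,q)}(f)\leq\rho_{h}^{(m,q)}(f)<\infty$ and $0<\lambda_{k}^{(n,q)}(g)\leq\rho_{k}^{(n,q)}(g)<\infty$, imply the hypotheses of both Theorem~\ref{t1} and Theorem~\ref{t3}, so that each may be invoked on the same ratio.

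For the leftmost inequality I would combine the upper bound on the liminf coming from Theorem~\ref{t1},
\[
\underset{r\rightarrow +\infty}{\underline{\lim}}\frac{\log^{[m]}T_{h}^{-1}T_{f}(r)}{\log^{[n]}T_{k}^{-1}T_{g}(r)}\leq\frac{\lambda_{h}^{(m,q)}(f)}{\lambda_{k}^{(n,q)}(g)},
\]
with the corresponding upper bound obtained from Theorem~\ref{t3},
\[
\underset{r\rightarrow +\infty}{\underline{\lim}}\frac{\log^{[m]}T_{h}^{-1}T_{f}(r)}{\log^{[n]}T_{k}^{-1}T_{g}(r)}\leq\frac{\rho_{h}^{(m,q)}(f)}{\rho_{k}^{(n,q)}(g)}.
\]
Since a single quantity that is dominated by both right-hand sides is dominated by their minimum, the first inequality of the statement follows. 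The middle inequality $\min\{a,b\}\leq\max\{a,b\}$ is automatic.

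For the rightmost inequality I would mirror the previous step. Theorem~\ref{t1} supplies
\[
\frac{\lambda_{h}^{(m,q)}(f)}{\lambda_{k}^{(n,q)}(g)}\leq\underset{r\rightarrow +\infty}{\overline{\lim}}\frac{\log^{[m]}T_{h}^{-1}T_{f}(r)}{\log^{[n]}T_{k}^{-1}T_{g}(r)},
\]
while Theorem~\ref{t3} supplies
\[
\frac{\rho_{h}^{(m,q)}(f)}{\rho_{k}^{(n,q)}(g)}\leq\underset{r\rightarrow +\infty}{\overline{\lim}}\frac{\log^{[m]}T_{h}^{-1}T_{f}(r)}{\log^{[n]}T_{k}^{-1}T_{g}(r)},
\]
and a quantity that dominates both left-hand sides dominates their maximum, giving the desired bound.

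No genuine obstacle appears: the argument is pure bookkeeping built on the elementary facts that if $L\leq a$ and $L\leq b$ then $L\leq\min\{a,b\}$, and dually for the maximum. The only small point worth flagging explicitly is that the strict positivity $\lambda_{k}^{(n,q)}(g)>0$ and $\rho_{k}^{(n,q)}(g)>0$ ensures the quotients on the right-hand sides are well defined, so no degenerate cases need separate treatment.
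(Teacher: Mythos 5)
Your proposal is correct and matches the paper's intent exactly: the paper states Theorem~\ref{t5} as ``a natural consequence of Theorem~\ref{t1} and Theorem~\ref{t3}'' and omits the proof, and your bookkeeping argument (extracting $\underline{\lim}\leq\lambda_{h}^{(m,q)}(f)/\lambda_{k}^{(n,q)}(g)\leq\overline{\lim}$ from Theorem~\ref{t1} and $\underline{\lim}\leq\rho_{h}^{(m,q)}(f)/\rho_{k}^{(n,q)}(g)\leq\overline{\lim}$ from Theorem~\ref{t3}, then passing to $\min$ and $\max$) is precisely the intended derivation. Your check that the hypotheses of Theorem~\ref{t5} imply those of both cited theorems is the only substantive point, and you handle it correctly.
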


\qquad The proof is omitted.

\begin{theorem}
\label{t1x} Let $f$, $g$ be any two meromorphic functions and $h,$ $k$ be
any two entire functions such that $0<$ $\overline{\sigma }_{h}^{\left(
m,q\right) }\left( f\right) $ $\leq $ $\sigma _{h}^{\left( m,q\right)
}\left( f\right) $ $<\infty $, $0<$ $\overline{\sigma }_{k}^{\left(
n,q\right) }\left( g\right) $ $\leq $ $\sigma _{k}^{\left( n,q\right)
}\left( g\right) $ $<\infty $ and $\rho _{h}^{\left( m,q\right) }\left(
f\right) $ $=$ $\rho _{k}^{\left( n,q\right) }\left( g\right) $ where $m,n$
and $p$ are any three positive integers. Then%
\begin{multline*}
\frac{\overline{\sigma }_{h}^{\left( m,q\right) }\left( f\right) }{\sigma
_{k}^{\left( n,q\right) }\left( g\right) }\leq \underset{r\rightarrow
+\infty }{\underline{\lim }}\frac{\log ^{\left[ m-1\right]
}T_{h}^{-1}T_{f}\left( r\right) }{\log ^{\left[ n-1\right]
}T_{k}^{-1}T_{g}\left( r\right) }\leq \frac{\overline{\sigma }_{h}^{\left(
m,q\right) }\left( f\right) }{\overline{\sigma }_{k}^{\left( n,q\right)
}\left( g\right) } \\
\leq \underset{r\rightarrow +\infty }{\overline{\lim }}\frac{\log ^{\left[
m-1\right] }T_{h}^{-1}T_{f}\left( r\right) }{\log ^{\left[ n-1\right]
}T_{k}^{-1}T_{g}\left( r\right) }\leq \frac{\sigma _{h}^{\left( m,q\right)
}\left( f\right) }{\overline{\sigma }_{k}^{\left( n,q\right) }\left(
g\right) }~.
\end{multline*}
\end{theorem}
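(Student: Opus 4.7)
The plan is to mimic the proof of Theorem \ref{t1}, replacing the order/lower-order estimates by type/lower-type estimates obtained directly from Definition \ref{d8}. For arbitrary $\varepsilon>0$, Definition \ref{d8} together with the equal-order hypothesis $\rho:=\rho_h^{(m,q)}(f)=\rho_k^{(n,q)}(g)$ yields, for all sufficiently large $r$, the two-sided bounds
\begin{align*}
(\overline{\sigma}_h^{(m,q)}(f)-\varepsilon)\bigl(\log^{[q-1]}r\bigr)^{\rho}
&\le \log^{[m-1]}T_h^{-1}T_f(r)
\le (\sigma_h^{(m,q)}(f)+\varepsilon)\bigl(\log^{[q-1]}r\bigr)^{\rho},\\
(\overline{\sigma}_k^{(n,q)}(g)-\varepsilon)\bigl(\log^{[q-1]}r\bigr)^{\rho}
&\le \log^{[n-1]}T_k^{-1}T_g(r)
\le (\sigma_k^{(n,q)}(g)+\varepsilon)\bigl(\log^{[q-1]}r\bigr)^{\rho},
\end{align*}
and, by the definition of $\liminf$ and $\limsup$, reversed inequalities on suitable sequences $r_n\to\infty$ (replacing the $\overline{\sigma}$ bound by $\overline{\sigma}+\varepsilon$ on one sequence and the $\sigma$ bound by $\sigma-\varepsilon$ on another). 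The crucial observation is that the factors $(\log^{[q-1]}r)^{\rho}$ in numerator and denominator are identical because $\rho_h^{(m,q)}(f)=\rho_k^{(n,q)}(g)$, so they cancel cleanly in every ratio.

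I would then establish the four inequalities of the theorem one at a time, in parallel with the four steps of Theorem \ref{t1}. For the leftmost inequality, combine the all-large-$r$ lower bound on $\log^{[m-1]}T_h^{-1}T_f(r)$ (via $\overline{\sigma}_h^{(m,q)}(f)-\varepsilon$) with the all-large-$r$ upper bound on $\log^{[n-1]}T_k^{-1}T_g(r)$ (via $\sigma_k^{(n,q)}(g)+\varepsilon$), divide, let $\varepsilon\downarrow 0$, and take $\liminf$. For the second inequality, pick a sequence on which the numerator approaches its $\liminf$-estimate $\overline{\sigma}_h^{(m,q)}(f)+\varepsilon$, and combine with the all-large-$r$ lower bound $\overline{\sigma}_k^{(n,q)}(g)-\varepsilon$ on the denominator. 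For the third inequality, use the all-large-$r$ lower bound $\overline{\sigma}_h^{(m,q)}(f)-\varepsilon$ on the numerator against a sequence on which the denominator approaches its $\liminf$-estimate $\overline{\sigma}_k^{(n,q)}(g)+\varepsilon$, and take $\limsup$. For the rightmost inequality, use the all-large-$r$ upper bound $\sigma_h^{(m,q)}(f)+\varepsilon$ on the numerator and the all-large-$r$ lower bound $\overline{\sigma}_k^{(n,q)}(g)-\varepsilon$ on the denominator.

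There is no real obstacle here: the argument is entirely mechanical once the equal-order hypothesis is invoked to cancel the power of $\log^{[q-1]}r$. The only point requiring a little care is the bookkeeping of "for all sufficiently large $r$" versus "for a sequence $r_n\to\infty$" in each of the four subcases, and making sure that in each case the pair of inequalities used is consistent (i.e., the common large-$r$ domain actually intersects the relevant sequence). Assembling the four resulting inequalities then gives the chain stated in Theorem \ref{t1x}.
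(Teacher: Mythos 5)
Your proposal is correct and follows essentially the same route as the paper's own proof: the same four pairings of ``for all sufficiently large $r$'' and ``for a sequence of $r$'' bounds drawn from the definitions of $\sigma$ and $\overline{\sigma}$, with the hypothesis $\rho _{h}^{\left( m,q\right) }\left( f\right) =\rho _{k}^{\left( n,q\right) }\left( g\right) $ used exactly as you describe to cancel the common factor $\left( \log ^{\left[ q-1\right] }r\right) ^{\rho }$. No substantive difference from the paper's argument.
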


\begin{proof}
From the definition of $\overline{\sigma }_{h}^{\left( m,q\right) }\left(
f\right) $ and $\sigma _{k}^{\left( n,q\right) }\left( g\right) $, we have
for arbitrary positive $\varepsilon $ and for all sufficiently large values
of $r$ that%
\begin{equation}
\log ^{\left[ m-1\right] }T_{h}^{-1}T_{f}\left( r\right) \geq \left( 
\overline{\sigma }_{h}^{\left( m,q\right) }\left( f\right) -\varepsilon
\right) \left[ \log ^{\left[ q-1\right] }r\right] ^{\rho _{h}^{\left(
m,q\right) }\left( f\right) },  \label{55.11}
\end{equation}%
and%
\begin{equation}
\log ^{\left[ n-1\right] }T_{k}^{-1}T_{g}\left( r\right) \leq \left( \sigma
_{k}^{\left( n,q\right) }\left( g\right) +\varepsilon \right) \left[ \log ^{%
\left[ q-1\right] }r\right] ^{\rho _{k}^{\left( n,q\right) }\left( g\right)
}~\text{.}  \label{55.12}
\end{equation}%
Now from $\left( \ref{55.11}\right) $, $\left( \ref{55.12}\right) $ and the
condition $\rho _{h}^{\left( m,q\right) }\left( f\right) $ $=$ $\rho
_{k}^{\left( n,q\right) }\left( g\right) ,$ it follows that for all
sufficiently large values of $r$ that%
\begin{equation*}
\frac{\log ^{\left[ m-1\right] }T_{h}^{-1}T_{f}\left( r\right) }{\log ^{%
\left[ n-1\right] }T_{k}^{-1}T_{g}\left( r\right) }\geqslant \frac{\overline{%
\sigma }_{h}^{\left( m,q\right) }\left( f\right) -\varepsilon }{\sigma
_{k}^{\left( n,q\right) }\left( g\right) +\varepsilon }.
\end{equation*}%
As $\varepsilon \left( >0\right) $ is arbitrary , we obtain from above that%
\begin{equation}
\underset{r\rightarrow +\infty }{\underline{\lim }}\frac{\log ^{\left[ m-1%
\right] }T_{h}^{-1}T_{f}\left( r\right) }{\log ^{\left[ n-1\right]
}T_{k}^{-1}T_{g}\left( r\right) }\geqslant \frac{\overline{\sigma }%
_{h}^{\left( m,q\right) }\left( f\right) }{\sigma _{k}^{\left( n,q\right)
}\left( g\right) }~.  \label{55.13}
\end{equation}%
Again for a sequence of values of $r$ tending to infinity, we get that%
\begin{equation}
\log ^{\left[ m-1\right] }T_{h}^{-1}T_{f}\left( r\right) \leq \left( 
\overline{\sigma }_{h}^{\left( m,q\right) }\left( f\right) +\varepsilon
\right) \left[ \log ^{\left[ q-1\right] }r\right] ^{\rho _{h}^{\left(
m,q\right) }\left( f\right) }  \label{55.14}
\end{equation}%
and for all sufficiently large values of $r$,%
\begin{equation}
\log ^{\left[ n-1\right] }T_{k}^{-1}T_{g}\left( r\right) \geq \left( 
\overline{\sigma }_{k}^{\left( n,q\right) }\left( g\right) -\varepsilon
\right) \left[ \log ^{\left[ q-1\right] }r\right] ^{\rho _{k}^{\left(
n,q\right) }\left( g\right) }~\text{.}  \label{55.15}
\end{equation}%
Combining $\left( \ref{55.14}\right) $ and $\left( \ref{55.15}\right) $ and
the condition $\rho _{h}^{\left( m,q\right) }\left( f\right) $ $=$ $\rho
_{k}^{\left( n,q\right) }\left( g\right) ,$ we get for a sequence of values
of $r$ tending to infinity that%
\begin{equation*}
\frac{\log ^{\left[ m-1\right] }T_{h}^{-1}T_{f}\left( r\right) }{\log ^{%
\left[ n-1\right] }T_{k}^{-1}T_{g}\left( r\right) }\leq \frac{\overline{%
\sigma }_{h}^{\left( m,q\right) }\left( f\right) +\varepsilon }{\overline{%
\sigma }_{k}^{\left( n,q\right) }\left( g\right) -\varepsilon }~.
\end{equation*}%
Since $\varepsilon \left( >0\right) $ is arbitrary, it follows from above
that%
\begin{equation}
\underset{r\rightarrow +\infty }{\underline{\lim }}\frac{\log ^{\left[ m-1%
\right] }T_{h}^{-1}T_{f}\left( r\right) }{\log ^{\left[ n-1\right]
}T_{k}^{-1}T_{g}\left( r\right) }\leq \frac{\overline{\sigma }_{h}^{\left(
m,q\right) }\left( f\right) }{\overline{\sigma }_{k}^{\left( n,q\right)
}\left( g\right) }~.  \label{55.16}
\end{equation}%
Also for a sequence of values of $r$ tending to infinity\ it follows that%
\begin{equation}
\log ^{\left[ n-1\right] }T_{k}^{-1}T_{g}\left( r\right) \leq \left( 
\overline{\sigma }_{k}^{\left( n,q\right) }\left( g\right) +\varepsilon
\right) \left[ \log ^{\left[ q-1\right] }r\right] ^{\rho _{k}^{\left(
n,q\right) }\left( g\right) }~.  \label{55.17}
\end{equation}%
Now from $\left( \ref{55.11}\right) $, $\left( \ref{55.17}\right) $ and the
condition $\rho _{h}^{\left( m,q\right) }\left( f\right) $ $=$ $\rho
_{k}^{\left( n,q\right) }\left( g\right) ,$ we obtain for a sequence of
values of $r$ tending to infinity that%
\begin{equation*}
\frac{\log ^{\left[ m-1\right] }T_{h}^{-1}T_{f}\left( r\right) }{\log ^{%
\left[ n-1\right] }T_{k}^{-1}T_{g}\left( r\right) }\geq \frac{\overline{%
\sigma }_{h}^{\left( m,q\right) }\left( f\right) -\varepsilon }{\overline{%
\sigma }_{k}^{\left( n,q\right) }\left( g\right) +\varepsilon }~\text{.}
\end{equation*}%
As $\varepsilon \left( >0\right) $ is arbitrary, we get from above that%
\begin{equation}
\underset{r\rightarrow +\infty }{\overline{\lim }}\frac{\log ^{\left[ m-1%
\right] }T_{h}^{-1}T_{f}\left( r\right) }{\log ^{\left[ n-1\right]
}T_{k}^{-1}T_{g}\left( r\right) }\geq \frac{\overline{\sigma }_{h}^{\left(
m,q\right) }\left( f\right) }{\overline{\sigma }_{k}^{\left( n,q\right)
}\left( g\right) }~.  \label{55.18}
\end{equation}%
Also for all sufficiently large values of $r$, we get that%
\begin{equation}
\log ^{\left[ m-1\right] }T_{h}^{-1}T_{f}\left( r\right) \leq \left( \sigma
_{h}^{\left( m,q\right) }\left( f\right) +\varepsilon \right) \left[ \log ^{%
\left[ q-1\right] }r\right] ^{\rho _{h}^{\left( m,q\right) }\left( f\right)
}~.  \label{55.19}
\end{equation}%
In view of the condition $\rho _{h}^{\left( m,q\right) }\left( f\right) $ $=$
$\rho _{k}^{\left( n,q\right) }\left( g\right) ,$ it follows from $\left( %
\ref{55.15}\right) $ and $\left( \ref{55.19}\right) $ for all sufficiently
large values of $r$ that%
\begin{equation*}
\frac{\log ^{\left[ m-1\right] }T_{h}^{-1}T_{f}\left( r\right) }{\log ^{%
\left[ n-1\right] }T_{k}^{-1}T_{g}\left( r\right) }\leq \frac{\sigma
_{h}^{\left( m,q\right) }\left( f\right) +\varepsilon }{\overline{\sigma }%
_{k}^{\left( n,q\right) }\left( g\right) -\varepsilon }~.
\end{equation*}%
Since $\varepsilon \left( >0\right) $ is arbitrary, we obtain that%
\begin{equation}
\underset{r\rightarrow +\infty }{\overline{\lim }}\frac{\log ^{\left[ m-1%
\right] }T_{h}^{-1}T_{f}\left( r\right) }{\log ^{\left[ n-1\right]
}T_{k}^{-1}T_{g}\left( r\right) }\leq \frac{\sigma _{h}^{\left( m,q\right)
}\left( f\right) }{\overline{\sigma }_{k}^{\left( n,q\right) }\left(
g\right) }~.  \label{55.20}
\end{equation}%
Thus the theorem follows from $\left( \ref{55.13}\right) ,$ $\left( \ref%
{55.16}\right) ,$ $\left( \ref{55.18}\right) $ and $\left( \ref{55.20}%
\right) .$
\end{proof}

\begin{theorem}
\label{t3x} Let $f$, $g$ be any two meromorphic functions and $h,$ $k$ be
any two entire functions such that $0<$ $\Delta _{h}^{\left( m,q\right)
}\left( f\right) $ $<\infty $, $0<$ $\Delta _{k}^{\left( n,q\right) }\left(
g\right) $ $<\infty $ and $\rho _{h}^{\left( m,q\right) }\left( f\right) $ $%
= $ $\rho _{k}^{\left( n,q\right) }\left( g\right) $ where $m,n$ and $p$ are
any three positive integers. Then%
\begin{equation*}
\underset{r\rightarrow +\infty }{\underline{\lim }}\frac{\log ^{\left[ m-1%
\right] }T_{h}^{-1}T_{f}\left( r\right) }{\log ^{\left[ n-1\right]
}T_{k}^{-1}T_{g}\left( r\right) }\leq \frac{\Delta _{h}^{\left( m,q\right)
}\left( f\right) }{\Delta _{k}^{\left( n,q\right) }\left( g\right) }\leq 
\underset{r\rightarrow +\infty }{\overline{\lim }}\frac{\log ^{\left[ m-1%
\right] }T_{h}^{-1}T_{f}\left( r\right) }{\log ^{\left[ n-1\right]
}T_{k}^{-1}T_{g}\left( r\right) }~.
\end{equation*}
\end{theorem}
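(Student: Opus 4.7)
The proof parallels that of Theorem \ref{t3} with $\rho$ replaced by the type-like indicator $\Delta$ and the outer iterated logarithm dropped by one step on each side, exactly as in the passage from Theorem \ref{t1} to Theorem \ref{t1x}. The plan is to extract four one-sided inequalities from the definitions of $\Delta_{h}^{(m,q)}(f)$ and $\Delta_{k}^{(n,q)}(g)$, pair them appropriately along sequences and for all large $r$, and use the hypothesis $\rho_{h}^{(m,q)}(f)=\rho_{k}^{(n,q)}(g)$ to cancel the common factor $(\log^{[q-1]}r)^{\rho}$ from numerator and denominator so that a clean ratio of the type-constants remains.

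First I would record, for any $\varepsilon>0$ and all sufficiently large $r$, the defining upper bound
\[
\log^{[m-1]}T_{h}^{-1}T_{f}(r)\le\bigl(\Delta_{h}^{(m,q)}(f)+\varepsilon\bigr)\bigl(\log^{[q-1]}r\bigr)^{\rho_{h}^{(m,q)}(f)},
\]
and pair it with the sequence-valid lower bound
\[
\log^{[n-1]}T_{k}^{-1}T_{g}(r)\ge\bigl(\Delta_{k}^{(n,q)}(g)-\varepsilon\bigr)\bigl(\log^{[q-1]}r\bigr)^{\rho_{k}^{(n,q)}(g)},
\]
holding along some $r_{j}\to\infty$ (the standard consequence of $\Delta$ being a $\limsup$). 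Dividing, using $\rho_{h}^{(m,q)}(f)=\rho_{k}^{(n,q)}(g)$ to cancel the log-powers, and letting $\varepsilon\downarrow 0$, I would obtain the left-hand inequality of the theorem.

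Symmetrically, I would use the sequence-valid lower bound
\[
\log^{[m-1]}T_{h}^{-1}T_{f}(r)\ge\bigl(\Delta_{h}^{(m,q)}(f)-\varepsilon\bigr)\bigl(\log^{[q-1]}r\bigr)^{\rho_{h}^{(m,q)}(f)}
\]
together with the all-large-$r$ upper bound
\[
\log^{[n-1]}T_{k}^{-1}T_{g}(r)\le\bigl(\Delta_{k}^{(n,q)}(g)+\varepsilon\bigr)\bigl(\log^{[q-1]}r\bigr)^{\rho_{k}^{(n,q)}(g)},
\]
again cancel the log-powers by the hypothesis on the $\rho$'s, and let $\varepsilon\downarrow 0$ to reach the right-hand inequality. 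Combining the two produces the theorem.

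The work is essentially a transcription of the argument for Theorem \ref{t3} into the type regime; the only subtle point is bookkeeping — one must pair a sequence-valid inequality for one function with an all-large-$r$ inequality for the other, so that the same sequence controls the $\underline{\lim}$ or $\overline{\lim}$ of the ratio. The assumption $\rho_{h}^{(m,q)}(f)=\rho_{k}^{(n,q)}(g)$ is essential here: without it the residual factor $(\log^{[q-1]}r)^{\rho_{h}-\rho_{k}}$ would drive the quotient to $0$ or $\infty$ and both sides of the double inequality would collapse. No serious obstacle is expected.
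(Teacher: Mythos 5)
Your proposal is correct and matches the paper's own proof essentially line for line: the paper likewise pairs the all-large-$r$ upper bound for one function with the sequence-valid lower bound for the other (its displays (\ref{55.19}) with (\ref{55.21}), and (\ref{55.23}) with (\ref{55.12})), cancels the common factor $\left(\log^{[q-1]}r\right)^{\rho}$ via the hypothesis $\rho_{h}^{(m,q)}(f)=\rho_{k}^{(n,q)}(g)$, and lets $\varepsilon\downarrow 0$. You also correctly read the undefined symbol $\Delta$ as a $\limsup$-type indicator (i.e.\ the analogue of $\sigma$), which is exactly how the paper's proof treats it.
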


\begin{proof}
From the definition of $\Delta _{k}^{\left( n,q\right) }\left( g\right) ,$
we get for a sequence of values of $r$ tending to infinity that%
\begin{equation}
\log ^{\left[ n-1\right] }T_{k}^{-1}T_{g}\left( r\right) \geq \left( \Delta
_{k}^{\left( n,q\right) }\left( g\right) -\varepsilon \right) \left[ \log ^{%
\left[ q-1\right] }r\right] ^{\rho _{k}^{\left( n,q\right) }\left( g\right)
}~.  \label{55.21}
\end{equation}%
Now from $\left( \ref{55.19}\right) $, $\left( \ref{55.21}\right) $ and the
condition $\rho _{h}^{\left( m,q\right) }\left( f\right) $ $=$ $\rho
_{k}^{\left( n,q\right) }\left( g\right) ,$ it follows for a sequence of
values of $r$ tending to infinity that%
\begin{equation*}
\frac{\log ^{\left[ m-1\right] }T_{h}^{-1}T_{f}\left( r\right) }{\log ^{%
\left[ n-1\right] }T_{k}^{-1}T_{g}\left( r\right) }\leq \frac{\Delta
_{h}^{\left( m,q\right) }\left( f\right) +\varepsilon }{\Delta _{k}^{\left(
n,q\right) }\left( g\right) -\varepsilon }~.
\end{equation*}%
As $\varepsilon \left( >0\right) $ is arbitrary, we obtain that%
\begin{equation}
\underset{r\rightarrow +\infty }{\underline{\lim }}\frac{\log ^{\left[ m-1%
\right] }T_{h}^{-1}T_{f}\left( r\right) }{\log ^{\left[ n-1\right]
}T_{k}^{-1}T_{g}\left( r\right) }\leq \frac{\Delta _{h}^{\left( m,q\right)
}\left( f\right) }{\Delta _{k}^{\left( n,q\right) }\left( g\right) }~.
\label{55.22}
\end{equation}%
Again for a sequence of values of $r$ tending to infinity that%
\begin{equation}
\log ^{\left[ m-1\right] }T_{h}^{-1}T_{f}\left( r\right) \geqslant \left(
\Delta _{h}^{\left( m,q\right) }\left( f\right) -\varepsilon \right) \left[
\log ^{\left[ q-1\right] }r\right] ^{\rho _{h}^{\left( m,q\right) }\left(
f\right) }~.  \label{55.23}
\end{equation}%
So combining $\left( \ref{55.12}\right) $ and $\left( \ref{55.23}\right) $
and in view of the condition $\rho _{h}^{\left[ m\right] }\left( f\right) $ $%
=$ $\rho _{k}^{\left( n,q\right) }\left( g\right) ,$ we get for a sequence
of values of $r$ tending to infinity that%
\begin{equation*}
\frac{\log ^{\left[ m-1\right] }T_{h}^{-1}T_{f}\left( r\right) }{\log ^{%
\left[ n-1\right] }T_{k}^{-1}T_{g}\left( r\right) }\geqslant \frac{\Delta
_{h}^{\left( m,q\right) }\left( f\right) -\varepsilon }{\Delta _{k}^{\left(
n,q\right) }\left( g\right) +\varepsilon }~.
\end{equation*}%
Since $\varepsilon \left( >0\right) $ is arbitrary, it follows that%
\begin{equation}
\underset{r\rightarrow +\infty }{\overline{\lim }}\frac{\log ^{\left[ m-1%
\right] }T_{h}^{-1}T_{f}\left( r\right) }{\log ^{\left[ n-1\right]
}T_{k}^{-1}T_{g}\left( r\right) }\geqslant \frac{\Delta _{h}^{\left(
m,q\right) }\left( f\right) }{\Delta _{k}^{\left( n,q\right) }\left(
g\right) }~.  \label{55.24}
\end{equation}%
Thus the theorem follows from $\left( \ref{55.22}\right) $ and $\left( \ref%
{55.24}\right) .$
\end{proof}

\qquad The following theorem is a natural consequence of Theorem \ref{t1x}
and Theorem \ref{t3x}:

\begin{theorem}
\label{t5x} Let $f$, $g$ be any two meromorphic functions and $h,$ $k$ be
any two entire functions such that $0<$ $\overline{\sigma }_{h}^{\left(
m,q\right) }\left( f\right) $ $\leq $ $\sigma _{h}^{\left( m,q\right)
}\left( f\right) $ $<\infty $, $0<$ $\overline{\sigma }_{k}^{\left(
n,q\right) }\left( g\right) $ $\leq $ $\sigma _{k}^{\left( n,q\right)
}\left( g\right) $ $<\infty $ and $\rho _{h}^{\left( m,q\right) }\left(
f\right) $ $=$ $\rho _{k}^{\left( n,q\right) }\left( g\right) $ where $m,n$
and $p$ are any three positive integers. Then%
\begin{multline*}
\underset{r\rightarrow +\infty }{\underline{\lim }}\frac{\log ^{\left[ m-1%
\right] }T_{h}^{-1}T_{f}\left( r\right) }{\log ^{\left[ n-1\right]
}T_{k}^{-1}T_{g}\left( r\right) }\leq \min \left\{ \frac{\overline{\sigma }%
_{h}^{\left( m,q\right) }\left( f\right) }{\overline{\sigma }_{k}^{\left(
n,q\right) }\left( g\right) },\frac{\sigma _{h}^{\left( m,q\right) }\left(
f\right) }{\sigma _{k}^{\left( n,q\right) }\left( g\right) }\right\} \\
\leq \max \left\{ \frac{\overline{\sigma }_{h}^{\left( m,q\right) }\left(
f\right) }{\overline{\sigma }_{k}^{\left( n,q\right) }\left( g\right) },%
\frac{\sigma _{h}^{\left( m,q\right) }\left( f\right) }{\sigma _{k}^{\left(
n,q\right) }\left( g\right) }\right\} \leq \underset{r\rightarrow +\infty }{%
\overline{\lim }}\frac{\log ^{\left[ m-1\right] }T_{h}^{-1}T_{f}\left(
r\right) }{\log ^{\left[ n-1\right] }T_{k}^{-1}T_{g}\left( r\right) }~.
\end{multline*}
\end{theorem}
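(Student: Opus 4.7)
The plan is to derive Theorem \ref{t5x} by purely algebraic assembly of the conclusions of Theorem \ref{t1x} and Theorem \ref{t3x}, in exact parallel with how Theorem \ref{t5} is obtained from Theorems \ref{t1} and \ref{t3}. No new growth estimate is required. Abbreviate
\[
Q(r)\;:=\;\frac{\log^{\left[ m-1\right] }T_{h}^{-1}T_{f}\left( r\right) }{\log^{\left[ n-1\right] }T_{k}^{-1}T_{g}\left( r\right) }.
\]
The hypotheses of Theorem \ref{t5x} coincide verbatim with those of Theorem \ref{t1x}, and since $0<\overline{\sigma}_{h}^{\left( m,q\right) }\left( f\right) \leq \sigma_{h}^{\left( m,q\right) }\left( f\right) <\infty$ forces $0<\sigma_{h}^{\left( m,q\right) }\left( f\right) <\infty$ (and analogously for $g$), Theorem \ref{t3x} is also applicable, with its symbol $\Delta$ read as $\sigma$ — as the proof of Theorem \ref{t3x} itself makes clear, the two ``$\Delta$'' inequalities used there are precisely the lim-sup characterisations of $\sigma$.

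From Theorem \ref{t1x} I intend to extract only the two middle links of its four-term chain, namely
\[
\underset{r\rightarrow +\infty }{\underline{\lim }}\,Q(r)\;\leq\;\frac{\overline{\sigma }_{h}^{\left( m,q\right) }\left( f\right) }{\overline{\sigma }_{k}^{\left( n,q\right) }\left( g\right) }\;\leq\;\underset{r\rightarrow +\infty }{\overline{\lim }}\,Q(r),
\]
while Theorem \ref{t3x} supplies the analogous sandwich
\[
\underset{r\rightarrow +\infty }{\underline{\lim }}\,Q(r)\;\leq\;\frac{\sigma _{h}^{\left( m,q\right) }\left( f\right) }{\sigma _{k}^{\left( n,q\right) }\left( g\right) }\;\leq\;\underset{r\rightarrow +\infty }{\overline{\lim }}\,Q(r).
\]

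To finish, I take the minimum of the two displayed upper bounds on $\underset{r\rightarrow +\infty }{\underline{\lim }}\,Q(r)$, the maximum of the two displayed lower bounds on $\underset{r\rightarrow +\infty }{\overline{\lim }}\,Q(r)$, and glue them together using the trivial $\min\{a,b\}\leq \max\{a,b\}$ in the middle. This reassembles exactly the four-term chain claimed in the statement. I foresee no genuine obstacle: once Theorems \ref{t1x} and \ref{t3x} are in hand, everything reduces to a one-line bookkeeping step, which is presumably why the author will (following the pattern of Theorem \ref{t5}) simply note that the proof is omitted.
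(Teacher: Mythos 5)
Your proposal is correct and matches the paper's intent exactly: the paper presents Theorem \ref{t5x} as a "natural consequence of Theorem \ref{t1x} and Theorem \ref{t3x}" with the proof omitted, and your assembly (taking the two middle links from Theorem \ref{t1x}, the sandwich from Theorem \ref{t3x} with $\Delta$ read as $\sigma$ as the paper's own proof of that theorem implicitly does, and gluing via $\min\leq\max$) is precisely the intended bookkeeping. Your verification that the hypotheses of both ingredient theorems are met is also sound.
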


\qquad Now in the line of Theorem \ref{t1x}, Theorem \ref{t3x} and Theorem %
\ref{t5x} respectively, one can easily prove the following six theorems
using the notion of $(p,q)$-th relative weak type and therefore their proofs
are omitted.

\begin{theorem}
\label{t7x} Let $f$, $g$ be any two meromorphic functions and $h,$ $k$ be
any two entire functions such that $0<$ $\tau _{h}^{\left( m,q\right)
}\left( f\right) $ $\leq $ $\overline{\tau }_{h}^{\left( m,q\right) }\left(
f\right) $ $<\infty $, $0<$ $\tau _{k}^{\left( n,q\right) }\left( g\right) $ 
$\leq $ $\overline{\tau }_{k}^{\left( n,q\right) }\left( g\right) $ $<\infty 
$ and $\lambda _{h}^{\left( m,q\right) }\left( f\right) $ $=$ $\lambda
_{k}^{\left( n,q\right) }\left( g\right) $ where $m,n$ and $p$ are any three
positive integers. Then%
\begin{multline*}
\frac{\tau _{h}^{\left( m,q\right) }\left( f\right) }{\overline{\tau }%
_{k}^{\left( n,q\right) }\left( g\right) }\leq \underset{r\rightarrow
+\infty }{\underline{\lim }}\frac{\log ^{\left[ m-1\right]
}T_{h}^{-1}T_{f}\left( r\right) }{\log ^{\left[ n-1\right]
}T_{k}^{-1}T_{g}\left( r\right) }\leq \frac{\tau _{h}^{\left( m,q\right)
}\left( f\right) }{\tau _{k}^{\left( n,q\right) }\left( g\right) } \\
\leq \underset{r\rightarrow +\infty }{\overline{\lim }}\frac{\log ^{\left[
m-1\right] }T_{h}^{-1}T_{f}\left( r\right) }{\log ^{\left[ n-1\right]
}T_{k}^{-1}T_{g}\left( r\right) }\leq \frac{\overline{\tau }_{h}^{\left(
m,q\right) }\left( f\right) }{\tau _{k}^{\left( n,q\right) }\left( g\right) }%
~.
\end{multline*}
\end{theorem}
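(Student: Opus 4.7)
The plan is to mimic the proof of Theorem \ref{t1x} almost verbatim, swapping only the roles of the ``upper'' and ``lower'' growth indicators. Note that, in contrast to Definition \ref{d8} where $\sigma$ is a $\limsup$ and $\overline{\sigma}$ a $\liminf$, Definition \ref{d9} places $\tau_h^{(m,q)}(f)$ as a $\liminf$ and $\overline{\tau}_h^{(m,q)}(f)$ as a $\limsup$. Consequently, in the present theorem $\tau$ plays the structural role that $\overline{\sigma}$ played in Theorem \ref{t1x}, while $\overline{\tau}$ plays the role of $\sigma$; the common order hypothesis $\rho_h^{(m,q)}(f)=\rho_k^{(n,q)}(g)$ is replaced throughout by the common lower order hypothesis $\lambda_h^{(m,q)}(f)=\lambda_k^{(n,q)}(g)$.

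First I would extract from Definition \ref{d9}, for an arbitrary $\varepsilon>0$, four one-sided estimates on $\log^{[m-1]}T_h^{-1}T_f(r)$: for all sufficiently large $r$ both $\log^{[m-1]}T_h^{-1}T_f(r)\geq (\tau_h^{(m,q)}(f)-\varepsilon)(\log^{[q-1]}r)^{\lambda_h^{(m,q)}(f)}$ and $\log^{[m-1]}T_h^{-1}T_f(r)\leq (\overline{\tau}_h^{(m,q)}(f)+\varepsilon)(\log^{[q-1]}r)^{\lambda_h^{(m,q)}(f)}$, and along some sequence $r\to\infty$ each of these holds with the inequality reversed and the sign of $\varepsilon$ flipped. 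The analogous four estimates hold for the pair $(g,k)$ with the exponent $(\log^{[q-1]}r)^{\lambda_k^{(n,q)}(g)}$.

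Next I would form the four quotients that give the four inequalities. Because of the hypothesis $\lambda_h^{(m,q)}(f)=\lambda_k^{(n,q)}(g)$, the factor $(\log^{[q-1]}r)^{\lambda}$ cancels identically, just as $(\log^{[q-1]}r)^{\rho}$ cancelled in Theorem \ref{t1x}. To bound the $\liminf$ from below by $\tau_h^{(m,q)}(f)/\overline{\tau}_k^{(n,q)}(g)$, I would pair the ``for all large $r$'' lower bound for $f$ (involving $\tau_h^{(m,q)}(f)$) with the ``for all large $r$'' upper bound for $g$ (involving $\overline{\tau}_k^{(n,q)}(g)$). To bound the $\liminf$ from above by $\tau_h^{(m,q)}(f)/\tau_k^{(n,q)}(g)$, I would pair the ``sequence'' upper bound for $f$ (involving $\tau_h^{(m,q)}(f)$) with the ``for all large $r$'' lower bound for $g$ (involving $\tau_k^{(n,q)}(g)$). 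The two bounds on $\limsup$ follow dually: the lower bound $\tau_h^{(m,q)}(f)/\tau_k^{(n,q)}(g)$ uses the ``for all large $r$'' lower bound for $f$ with the ``sequence'' upper bound for $g$, and the upper bound $\overline{\tau}_h^{(m,q)}(f)/\tau_k^{(n,q)}(g)$ uses the ``for all large $r$'' upper bound for $f$ with the ``for all large $r$'' lower bound for $g$. Sending $\varepsilon\downarrow 0$ in each of the four resulting quotients yields the four inequalities asserted by the theorem.

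The steps are routine; the only mild obstacle is keeping the bookkeeping straight, namely pairing ``for all large $r$'' estimates on both sides for the outermost inequalities and pairing a ``sequence'' estimate with a ``for all large $r$'' estimate for the inner two. The equal-lower-order hypothesis is essential, since without it the surviving factor $(\log^{[q-1]}r)^{\lambda_h^{(m,q)}(f)-\lambda_k^{(n,q)}(g)}$ would drift to $0$ or $\infty$ and invalidate the cancellation.
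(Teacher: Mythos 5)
Your proposal is correct and is exactly the argument the paper intends: the paper omits the proof of this theorem, stating only that it follows along the lines of Theorem \ref{t1x}, and your adaptation (with $\tau$, $\overline{\tau}$ and $\lambda$ taking the structural roles of $\overline{\sigma}$, $\sigma$ and $\rho$, and with the correct pairing of ``for all large $r$'' versus ``sequence'' estimates for each of the four inequalities) is precisely that adaptation.
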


\begin{theorem}
\label{t8} Let $f$, $g$ be any two meromorphic functions and $h,$ $k$ be any
two entire functions such that $0<$ $\overline{\tau }_{h}^{\left( m,q\right)
}\left( f\right) $ $<\infty $, $0<$ $\overline{\tau }_{k}^{\left( n,q\right)
}\left( g\right) $ $<\infty $ and $\lambda _{h}^{\left( m,q\right) }\left(
f\right) $ $=$ $\lambda _{k}^{\left( n,q\right) }\left( g\right) $ where $%
m,n $ and $p$ are any three positive integers. Then%
\begin{equation*}
\underset{r\rightarrow +\infty }{\underline{\lim }}\frac{\log ^{\left[ m-1%
\right] }T_{h}^{-1}T_{f}\left( r\right) }{\log ^{\left[ n-1\right]
}T_{k}^{-1}T_{g}\left( r\right) }\leq \frac{\overline{\tau }_{h}^{\left(
m,q\right) }\left( f\right) }{\overline{\tau }_{k}^{\left( n,q\right)
}\left( g\right) }\leq \underset{r\rightarrow +\infty }{\overline{\lim }}%
\frac{\log ^{\left[ m-1\right] }T_{h}^{-1}T_{f}\left( r\right) }{\log ^{%
\left[ n-1\right] }T_{k}^{-1}T_{g}\left( r\right) }~.
\end{equation*}
\end{theorem}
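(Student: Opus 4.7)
The plan is to mirror the proof of Theorem \ref{t3x}, replacing the type $\sigma$ (more precisely $\Delta$, normalised by $\rho$) with the weak-type upper indicator $\overline{\tau}$ (normalised by $\lambda$), and using the hypothesis $\lambda_{h}^{(m,q)}(f)=\lambda_{k}^{(n,q)}(g)$ in place of $\rho_{h}^{(m,q)}(f)=\rho_{k}^{(n,q)}(g)$ to cancel the common power of $\log^{[q-1]}r$. Since Theorem \ref{t8} only asserts the two-sided sandwich of the ratio $\overline{\tau}_{h}^{(m,q)}(f)/\overline{\tau}_{k}^{(n,q)}(g)$ between the lower and upper limits, only the limsup bounds from Definition \ref{d9} are needed for both functions.

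First I would record the four basic estimates coming from $\overline{\tau}_{h}^{(m,q)}(f)=\limsup$ and $\overline{\tau}_{k}^{(n,q)}(g)=\limsup$: for arbitrary $\varepsilon>0$, for all sufficiently large $r$,
\begin{equation*}
\log^{[m-1]}T_{h}^{-1}T_{f}(r)\leq\bigl(\overline{\tau}_{h}^{(m,q)}(f)+\varepsilon\bigr)\bigl[\log^{[q-1]}r\bigr]^{\lambda_{h}^{(m,q)}(f)},
\end{equation*}
\begin{equation*}
\log^{[n-1]}T_{k}^{-1}T_{g}(r)\leq\bigl(\overline{\tau}_{k}^{(n,q)}(g)+\varepsilon\bigr)\bigl[\log^{[q-1]}r\bigr]^{\lambda_{k}^{(n,q)}(g)},
\end{equation*}
while along suitable sequences $r\to\infty$,
\begin{equation*}
\log^{[m-1]}T_{h}^{-1}T_{f}(r)\geq\bigl(\overline{\tau}_{h}^{(m,q)}(f)-\varepsilon\bigr)\bigl[\log^{[q-1]}r\bigr]^{\lambda_{h}^{(m,q)}(f)},
\end{equation*}
\begin{equation*}
\log^{[n-1]}T_{k}^{-1}T_{g}(r)\geq\bigl(\overline{\tau}_{k}^{(n,q)}(g)-\varepsilon\bigr)\bigl[\log^{[q-1]}r\bigr]^{\lambda_{k}^{(n,q)}(g)}.
\end{equation*}

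Next, for the left-hand inequality in the conclusion I would divide the everywhere-valid upper estimate on $\log^{[m-1]}T_{h}^{-1}T_{f}(r)$ by the sequence lower estimate on $\log^{[n-1]}T_{k}^{-1}T_{g}(r)$; since $\lambda_{h}^{(m,q)}(f)=\lambda_{k}^{(n,q)}(g)$, the factor $[\log^{[q-1]}r]^{\lambda}$ cancels, yielding
\begin{equation*}
\frac{\log^{[m-1]}T_{h}^{-1}T_{f}(r)}{\log^{[n-1]}T_{k}^{-1}T_{g}(r)}\leq\frac{\overline{\tau}_{h}^{(m,q)}(f)+\varepsilon}{\overline{\tau}_{k}^{(n,q)}(g)-\varepsilon}
\end{equation*}
along the chosen sequence; taking $\liminf_{r\to\infty}$ and then $\varepsilon\downarrow 0$ (which is legal since $\overline{\tau}_{k}^{(n,q)}(g)>0$) produces the required $\liminf\leq\overline{\tau}_{h}^{(m,q)}(f)/\overline{\tau}_{k}^{(n,q)}(g)$. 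For the right-hand inequality I would symmetrically combine the sequence lower estimate for $h,f$ with the everywhere-valid upper estimate for $k,g$ to obtain
\begin{equation*}
\frac{\log^{[m-1]}T_{h}^{-1}T_{f}(r)}{\log^{[n-1]}T_{k}^{-1}T_{g}(r)}\geq\frac{\overline{\tau}_{h}^{(m,q)}(f)-\varepsilon}{\overline{\tau}_{k}^{(n,q)}(g)+\varepsilon}
\end{equation*}
along a suitable sequence, pass to $\limsup$, and let $\varepsilon\downarrow 0$.

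There is no real obstacle in this argument: the exponents in the numerator and denominator are identical by hypothesis, so the cancellation is immediate and the entire proof reduces to the bookkeeping already carried out in Theorem \ref{t3x}. The only point to be careful about is that the two sequences realising the limsups for $f$ and for $g$ need not coincide, but this is harmless: for the first inequality we use the $g$-sequence and for the second we use the $f$-sequence, and in each case the opposing bound is used in its everywhere-valid form.
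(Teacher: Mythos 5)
Your proof is correct and is exactly the argument the paper intends: Theorem \ref{t8} is stated with its proof omitted as being "in the line of Theorem \ref{t3x}", and your proposal is precisely that adaptation, replacing $\sigma$/$\Delta$ and $\rho$ by $\overline{\tau}$ and $\lambda$ and using the everywhere-valid upper bounds together with the sequence lower bounds that follow from $\overline{\tau}$ being a $\limsup$. Your closing remark about using the $g$-sequence for the $\liminf$ bound and the $f$-sequence for the $\limsup$ bound correctly handles the only delicate point.
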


\begin{theorem}
\label{t9} Let $f$, $g$ be any two meromorphic functions and $h,$ $k$ be any
two entire functions such that $0<$ $\tau _{h}^{\left( m,q\right) }\left(
f\right) $ $\leq $ $\overline{\tau }_{h}^{\left( m,q\right) }\left( f\right) 
$ $<\infty $, $0<$ $\tau _{k}^{\left( n,q\right) }\left( g\right) $ $\leq $ $%
\overline{\tau }_{k}^{\left( n,q\right) }\left( g\right) $ $<\infty $ and $%
\lambda _{h}^{\left( m,q\right) }\left( f\right) $ $=$ $\lambda _{k}^{\left(
n,q\right) }\left( g\right) $ where $m,n$ and $p$ are any three positive
integers. Then%
\begin{multline*}
\underset{r\rightarrow +\infty }{\underline{\lim }}\frac{\log ^{\left[ m-1%
\right] }T_{h}^{-1}T_{f}\left( r\right) }{\log ^{\left[ n-1\right]
}T_{k}^{-1}T_{g}\left( r\right) }\leq \min \left\{ \frac{\tau _{h}^{\left(
m,q\right) }\left( f\right) }{\tau _{k}^{\left( n,q\right) }\left( g\right) }%
,\frac{\overline{\tau }_{h}^{\left( m,q\right) }\left( f\right) }{\overline{%
\tau }_{k}^{\left( n,q\right) }\left( g\right) }\right\} \\
\leq \max \left\{ \frac{\tau _{h}^{\left( m,q\right) }\left( f\right) }{\tau
_{k}^{\left( n,q\right) }\left( g\right) },\frac{\overline{\tau }%
_{h}^{\left( m,q\right) }\left( f\right) }{\overline{\tau }_{k}^{\left(
n,q\right) }\left( g\right) }\right\} \leq \underset{r\rightarrow +\infty }{%
\overline{\lim }}\frac{\log ^{\left[ m-1\right] }T_{h}^{-1}T_{f}\left(
r\right) }{\log ^{\left[ n-1\right] }T_{k}^{-1}T_{g}\left( r\right) }~.
\end{multline*}
\end{theorem}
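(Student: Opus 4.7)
The plan is to derive Theorem \ref{t9} as an immediate consequence of Theorem \ref{t7x} and Theorem \ref{t8}, exactly parallel to the way Theorem \ref{t5x} was obtained from Theorem \ref{t1x} and Theorem \ref{t3x}. Under the present hypotheses, namely $0<\tau_{h}^{(m,q)}(f)\leq \overline{\tau}_{h}^{(m,q)}(f)<\infty$, $0<\tau_{k}^{(n,q)}(g)\leq \overline{\tau}_{k}^{(n,q)}(g)<\infty$, and the equality $\lambda_{h}^{(m,q)}(f)=\lambda_{k}^{(n,q)}(g)$, the hypotheses of both preceding theorems are automatically met, so no extra preparatory estimates from the definitions of $\tau$ and $\overline{\tau}$ are needed.

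First I would extract from Theorem \ref{t7x} the middle portion of its four-term chain, which says
\begin{equation*}
\underset{r\rightarrow +\infty}{\underline{\lim}}\frac{\log^{[m-1]}T_{h}^{-1}T_{f}(r)}{\log^{[n-1]}T_{k}^{-1}T_{g}(r)}\leq \frac{\tau_{h}^{(m,q)}(f)}{\tau_{k}^{(n,q)}(g)}\leq \underset{r\rightarrow +\infty}{\overline{\lim}}\frac{\log^{[m-1]}T_{h}^{-1}T_{f}(r)}{\log^{[n-1]}T_{k}^{-1}T_{g}(r)}.
\end{equation*}
Next, applying Theorem \ref{t8} directly yields
\begin{equation*}
\underset{r\rightarrow +\infty}{\underline{\lim}}\frac{\log^{[m-1]}T_{h}^{-1}T_{f}(r)}{\log^{[n-1]}T_{k}^{-1}T_{g}(r)}\leq \frac{\overline{\tau}_{h}^{(m,q)}(f)}{\overline{\tau}_{k}^{(n,q)}(g)}\leq \underset{r\rightarrow +\infty}{\overline{\lim}}\frac{\log^{[m-1]}T_{h}^{-1}T_{f}(r)}{\log^{[n-1]}T_{k}^{-1}T_{g}(r)}.
\end{equation*}

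Combining these two pairs of bounds, the $\underline{\lim}$ lies below each of the ratios $\tau_{h}^{(m,q)}(f)/\tau_{k}^{(n,q)}(g)$ and $\overline{\tau}_{h}^{(m,q)}(f)/\overline{\tau}_{k}^{(n,q)}(g)$, and therefore below their minimum; symmetrically, both ratios lie below the $\overline{\lim}$, so the $\overline{\lim}$ dominates their maximum. The trivial inequality $\min\leq \max$ links the two middle terms, producing the full four-term chain claimed in the statement.

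Since the argument is a pure recombination of two previously established inequalities, there is no genuine obstacle: no new $\varepsilon$-estimates from the definitions are required, and no case analysis is needed. The only point that warrants a moment's care is verifying that the hypotheses of Theorem \ref{t9} genuinely entail those of both Theorem \ref{t7x} and Theorem \ref{t8}, which is immediate from $\tau\leq \overline{\tau}$ together with the common-$\lambda$ assumption. This is precisely why the author declares that the proof may safely be omitted.
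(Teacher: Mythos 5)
Your proposal is correct and matches the paper's intended route: the paper omits the proof of Theorem \ref{t9}, indicating it follows from Theorem \ref{t7x} and Theorem \ref{t8} exactly as Theorem \ref{t5x} follows from Theorem \ref{t1x} and Theorem \ref{t3x}, which is precisely the recombination you carry out. Your check that the hypotheses of Theorem \ref{t9} imply those of both cited theorems, and the $\min$/$\max$ assembly of the two two-sided bounds, are both sound.
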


\qquad We may now state the following theorems without their proofs based on 
$(p,q)$-th relative type and\ $(p,q)$-th\ relative weak type:

\begin{theorem}
\label{t13} Let $f$, $g$ be any two meromorphic functions and $h,$ $k$ be
any two entire functions such that $0<$ $\overline{\sigma }_{h}^{\left(
m,q\right) }\left( f\right) $ $\leq $ $\sigma _{h}^{\left( m,q\right)
}\left( f\right) $ $<\infty $, $0<$ $\tau _{k}^{\left( n,q\right) }\left(
g\right) $ $\leq $ $\overline{\tau }_{k}^{\left( n,q\right) }\left( g\right) 
$ $<\infty $ and $\rho _{h}^{\left( m,q\right) }\left( f\right) $ $=$ $%
\lambda _{k}^{\left( n,q\right) }\left( g\right) $ where $m,n$ and $p$ are
any three positive integers. Then%
\begin{multline*}
\frac{\overline{\sigma }_{h}^{\left( m,q\right) }\left( f\right) }{\overline{%
\tau }_{k}^{\left( n,q\right) }\left( g\right) }\leq \underset{r\rightarrow
+\infty }{\underline{\lim }}\frac{\log ^{\left[ m-1\right]
}T_{h}^{-1}T_{f}\left( r\right) }{\log ^{\left[ n-1\right]
}T_{k}^{-1}T_{g}\left( r\right) }\leq \frac{\overline{\sigma }_{h}^{\left(
m,q\right) }\left( f\right) }{\tau _{k}^{\left( n,q\right) }\left( g\right) }
\\
\leq \underset{r\rightarrow +\infty }{\overline{\lim }}\frac{\log ^{\left[
m-1\right] }T_{h}^{-1}T_{f}\left( r\right) }{\log ^{\left[ n-1\right]
}T_{k}^{-1}T_{g}\left( r\right) }\leq \frac{\sigma _{h}^{\left( m,q\right)
}\left( f\right) }{\tau _{k}^{\left( n,q\right) }\left( g\right) }~.
\end{multline*}
\end{theorem}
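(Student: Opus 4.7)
The plan is to follow the same four-part $\varepsilon$-argument used in Theorem \ref{t1x}, but replace the $g$-side bounds coming from relative type by the corresponding bounds coming from relative weak type, and exploit the hypothesis $\rho_h^{(m,q)}(f)=\lambda_k^{(n,q)}(g)$ to make the exponent on $\log^{[q-1]}r$ the same in the numerator and denominator, so that it cancels when we form the ratio $\log^{[m-1]}T_h^{-1}T_f(r)/\log^{[n-1]}T_k^{-1}T_g(r)$.

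First I would record the six one-sided asymptotic estimates that feed the argument. From Definition \ref{d8}, for every $\varepsilon>0$ and all sufficiently large $r$,
\begin{equation*}
\bigl(\overline{\sigma}_h^{(m,q)}(f)-\varepsilon\bigr)\bigl[\log^{[q-1]}r\bigr]^{\rho_h^{(m,q)}(f)}\leq \log^{[m-1]}T_h^{-1}T_f(r)\leq \bigl(\sigma_h^{(m,q)}(f)+\varepsilon\bigr)\bigl[\log^{[q-1]}r\bigr]^{\rho_h^{(m,q)}(f)},
\end{equation*}
together with the sequence estimate $\log^{[m-1]}T_h^{-1}T_f(r)\leq (\overline{\sigma}_h^{(m,q)}(f)+\varepsilon)[\log^{[q-1]}r]^{\rho_h^{(m,q)}(f)}$ attained along some $r\to\infty$. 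From Definition \ref{d9}, analogous inequalities for $g$ read: for all sufficiently large $r$,
\begin{equation*}
\bigl(\tau_k^{(n,q)}(g)-\varepsilon\bigr)\bigl[\log^{[q-1]}r\bigr]^{\lambda_k^{(n,q)}(g)}\leq \log^{[n-1]}T_k^{-1}T_g(r)\leq \bigl(\overline{\tau}_k^{(n,q)}(g)+\varepsilon\bigr)\bigl[\log^{[q-1]}r\bigr]^{\lambda_k^{(n,q)}(g)},
\end{equation*}
and on a sequence $r\to\infty$, $\log^{[n-1]}T_k^{-1}T_g(r)\leq (\tau_k^{(n,q)}(g)+\varepsilon)[\log^{[q-1]}r]^{\lambda_k^{(n,q)}(g)}$.

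Next I would derive the four claimed inequalities by pairing these bounds. For the leftmost estimate, divide the lower bound for $\log^{[m-1]}T_h^{-1}T_f(r)$ valid for all large $r$ by the upper bound for $\log^{[n-1]}T_k^{-1}T_g(r)$ valid for all large $r$; the powers of $\log^{[q-1]}r$ cancel by the hypothesis, giving $\underline{\lim}\geq(\overline{\sigma}_h^{(m,q)}(f)-\varepsilon)/(\overline{\tau}_k^{(n,q)}(g)+\varepsilon)$, and letting $\varepsilon\to 0^+$ yields the bound $\overline{\sigma}/\overline{\tau}$. For the second inequality, divide the sequence upper bound for $f$ by the uniform lower bound for $g$, obtaining $\underline{\lim}\leq\overline{\sigma}/\tau$. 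For the third, divide the uniform lower bound for $f$ by the sequence upper bound for $g$, obtaining $\overline{\lim}\geq\overline{\sigma}/\tau$. Finally, divide the uniform upper bound for $f$ by the uniform lower bound for $g$ to get $\overline{\lim}\leq\sigma/\tau$. Combining the four and letting $\varepsilon\to 0^+$ produces the full chain in the theorem.

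There is no real obstacle here: the only bookkeeping concern is making sure to pair a \emph{sequence} estimate with a \emph{uniform} estimate in exactly the way dictated by whether we are bounding $\underline{\lim}$ from above or $\overline{\lim}$ from below, since these are precisely the points where the definitions $\liminf$/$\limsup$ give us only one-sided witnesses. The positivity of $\tau_k^{(n,q)}(g)$ and $\overline{\tau}_k^{(n,q)}(g)$ assumed in the hypothesis is what allows us to divide and then shrink $\varepsilon$, and the equality $\rho_h^{(m,q)}(f)=\lambda_k^{(n,q)}(g)$ is the sole structural ingredient that makes the ratio well-behaved; once these are in place the argument is essentially identical in shape to Theorem \ref{t1x}, which is why the author marks it as omittable.
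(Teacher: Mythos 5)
Your proposal is correct and is exactly the argument the paper intends: the paper omits the proof of Theorem \ref{t13}, presenting it as the analogue of Theorem \ref{t1x} with the denominator bounds drawn from the relative $(p,q)$-th weak type of $g$ and the exponent cancellation supplied by $\rho _{h}^{\left( m,q\right) }\left( f\right) =\lambda _{k}^{\left( n,q\right) }\left( g\right) $. Your pairing of sequence versus uniform estimates for each of the four inequalities matches the template of $\left( \ref{55.13}\right) $--$\left( \ref{55.20}\right) $, so nothing further is needed.
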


\begin{theorem}
\label{t14} Let $f$, $g$ be any two meromorphic functions and $h,$ $k$ be
any two entire functions such that $0<$ $\sigma _{h}^{\left( m,q\right)
}\left( f\right) $ $<\infty $, $0<$ $\overline{\tau }_{k}^{\left( n,q\right)
}\left( g\right) $ $<\infty $ and $\rho _{h}^{\left( m,q\right) }\left(
f\right) $ $=$ $\lambda _{k}^{\left( n,q\right) }\left( g\right) $ where $%
m,n $ and $p$ are any three positive integers. Then%
\begin{equation*}
\underset{r\rightarrow +\infty }{\underline{\lim }}\frac{\log ^{\left[ m-1%
\right] }T_{h}^{-1}T_{f}\left( r\right) }{\log ^{\left[ n-1\right]
}T_{k}^{-1}T_{g}\left( r\right) }\leq \frac{\sigma _{h}^{\left( m,q\right)
}\left( f\right) }{\overline{\tau }_{k}^{\left( n,q\right) }\left( g\right) }%
\leq \underset{r\rightarrow \infty }{\overline{\lim }}\frac{\log ^{\left[ m-1%
\right] }T_{h}^{-1}T_{f}\left( r\right) }{\log ^{\left[ n-1\right]
}T_{k}^{-1}T_{g}\left( r\right) }~.
\end{equation*}
\end{theorem}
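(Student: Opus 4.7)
The plan is to imitate the pattern of Theorem \ref{t3x}, replacing the role of the lower type $\Delta_k^{(n,q)}(g)$ of $g$ with the weak-type quantity $\overline{\tau}_k^{(n,q)}(g)$, and observing that the common exponent on $\log^{[q-1]}r$ is now the pair $\rho_h^{(m,q)}(f)=\lambda_k^{(n,q)}(g)$ rather than $\rho_h^{(m,q)}(f)=\rho_k^{(n,q)}(g)$. So the entire proof is a book-keeping exercise in $\limsup$/$\liminf$ definitions; no deeper structural input is needed.

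First I would extract four inequalities from the two definitions. From $\sigma_h^{(m,q)}(f)=\limsup$, for arbitrary $\varepsilon>0$ one has
\begin{equation*}
\log^{[m-1]}T_h^{-1}T_f(r)\leq \bigl(\sigma_h^{(m,q)}(f)+\varepsilon\bigr)\bigl[\log^{[q-1]}r\bigr]^{\rho_h^{(m,q)}(f)}
\end{equation*}
for all sufficiently large $r$, and the reverse inequality with $-\varepsilon$ on a sequence $r\to\infty$. Similarly, from $\overline{\tau}_k^{(n,q)}(g)=\limsup$, I get
\begin{equation*}
\log^{[n-1]}T_k^{-1}T_g(r)\leq \bigl(\overline{\tau}_k^{(n,q)}(g)+\varepsilon\bigr)\bigl[\log^{[q-1]}r\bigr]^{\lambda_k^{(n,q)}(g)}
\end{equation*}
for all sufficiently large $r$, and the reverse with $-\varepsilon$ on some sequence.

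Next I would use the hypothesis $\rho_h^{(m,q)}(f)=\lambda_k^{(n,q)}(g)$ so that the common factor $[\log^{[q-1]}r]^{\rho_h^{(m,q)}(f)}$ cancels whenever the numerator and denominator are divided. For the upper bound on $\underline{\lim}$, I take the \emph{everywhere-large-$r$} upper bound on $\log^{[m-1]}T_h^{-1}T_f(r)$ together with the \emph{sequence} lower bound on $\log^{[n-1]}T_k^{-1}T_g(r)$; along that sequence the ratio is at most $(\sigma_h^{(m,q)}(f)+\varepsilon)/(\overline{\tau}_k^{(n,q)}(g)-\varepsilon)$, and letting $\varepsilon\downarrow 0$ gives the $\underline{\lim}\leq \sigma_h^{(m,q)}(f)/\overline{\tau}_k^{(n,q)}(g)$ inequality. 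For the lower bound on $\overline{\lim}$ I swap roles: I use the \emph{sequence} lower bound on $\log^{[m-1]}T_h^{-1}T_f(r)$ and the \emph{everywhere-large-$r$} upper bound on $\log^{[n-1]}T_k^{-1}T_g(r)$; along that sequence the ratio is at least $(\sigma_h^{(m,q)}(f)-\varepsilon)/(\overline{\tau}_k^{(n,q)}(g)+\varepsilon)$, and $\varepsilon\downarrow 0$ gives $\overline{\lim}\geq \sigma_h^{(m,q)}(f)/\overline{\tau}_k^{(n,q)}(g)$. Combining completes the proof.

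There is no real obstacle; the only point that requires a little care is making sure I always pair an ``all sufficiently large $r$'' estimate with a ``sequence'' estimate in opposite directions, because mixing two sequence estimates would only yield information along an intersection of sequences that one cannot guarantee is nonempty. The hypothesis $\rho_h^{(m,q)}(f)=\lambda_k^{(n,q)}(g)$ is essential and is used precisely at the cancellation step; without it the ratio would still contain an unbounded factor $[\log^{[q-1]}r]^{\rho_h^{(m,q)}(f)-\lambda_k^{(n,q)}(g)}$ and the conclusion would fail.
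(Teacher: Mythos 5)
Your proof is correct and follows exactly the pattern the paper intends: the paper omits the proof of this theorem, noting it goes along the lines of Theorems \ref{t1x} and \ref{t3x}, and your argument is precisely that template --- pair the ``all sufficiently large $r$'' bound from one $\limsup$ definition with the ``sequence'' bound from the other, cancel the common factor $\bigl[\log^{[q-1]}r\bigr]^{\rho_h^{(m,q)}(f)}$ using $\rho_h^{(m,q)}(f)=\lambda_k^{(n,q)}(g)$, and let $\varepsilon\downarrow 0$. Nothing is missing.
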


\begin{theorem}
\label{t15} Let $f$, $g$ be any two meromorphic functions and $h,$ $k$ be
any two entire functions such that $0<$ $\overline{\sigma }_{h}^{\left(
m,q\right) }\left( f\right) $ $\leq $ $\sigma _{h}^{\left( m,q\right)
}\left( f\right) $ $<\infty $, $0<$ $\tau _{k}^{\left( n,q\right) }\left(
g\right) $ $\leq $ $\overline{\tau }_{k}^{\left( n,q\right) }\left( g\right) 
$ $<\infty $ and $\rho _{h}^{\left( m,q\right) }\left( f\right) $ $=$ $%
\lambda _{k}^{\left( n,q\right) }\left( g\right) $ where $m,n$ and $p$ are
any three positive integers. Then%
\begin{multline*}
\underset{r\rightarrow +\infty }{\underline{\lim }}\frac{\log ^{\left[ m-1%
\right] }T_{h}^{-1}T_{f}\left( r\right) }{\log ^{\left[ n-1\right]
}T_{k}^{-1}T_{g}\left( r\right) }\leq \min \left\{ \frac{\overline{\sigma }%
_{h}^{\left( m,q\right) }\left( f\right) }{\tau _{k}^{\left( n,q\right)
}\left( g\right) },\frac{\sigma _{h}^{\left( m,q\right) }\left( f\right) }{%
\overline{\tau }_{k}^{\left( n,q\right) }\left( g\right) }\right\} \\
\leq \max \left\{ \frac{\overline{\sigma }_{h}^{\left( m,q\right) }\left(
f\right) }{\tau _{k}^{\left( n,q\right) }\left( g\right) },\frac{\sigma
_{h}^{\left( m,q\right) }\left( f\right) }{\overline{\tau }_{k}^{\left(
n,q\right) }\left( g\right) }\right\} \leq \underset{r\rightarrow +\infty }{%
\overline{\lim }}\frac{\log ^{\left[ m-1\right] }T_{h}^{-1}T_{f}\left(
r\right) }{\log ^{\left[ n-1\right] }T_{k}^{-1}T_{g}\left( r\right) }~.
\end{multline*}
\end{theorem}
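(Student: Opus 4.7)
The plan is to obtain Theorem \ref{t15} directly as a corollary of Theorems \ref{t13} and \ref{t14}, exactly parallel to the way Theorem \ref{t5} was deduced from Theorems \ref{t1} and \ref{t3}, and Theorem \ref{t5x} was deduced from Theorems \ref{t1x} and \ref{t3x}. The hypotheses of \ref{t15}, namely $0<\overline{\sigma}_{h}^{(m,q)}(f)\le \sigma_{h}^{(m,q)}(f)<\infty$, $0<\tau_{k}^{(n,q)}(g)\le \overline{\tau}_{k}^{(n,q)}(g)<\infty$, and $\rho_{h}^{(m,q)}(f)=\lambda_{k}^{(n,q)}(g)$, are precisely the conjunction of the hypotheses of the earlier two theorems, so both can be invoked without any further work.

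First I would apply Theorem \ref{t13}, which yields the chain
\[
\frac{\overline{\sigma}_{h}^{(m,q)}(f)}{\overline{\tau}_{k}^{(n,q)}(g)}\le \underline{\lim}_{r\to+\infty}\frac{\log^{[m-1]}T_{h}^{-1}T_{f}(r)}{\log^{[n-1]}T_{k}^{-1}T_{g}(r)}\le \frac{\overline{\sigma}_{h}^{(m,q)}(f)}{\tau_{k}^{(n,q)}(g)}\le \overline{\lim}_{r\to+\infty}\frac{\log^{[m-1]}T_{h}^{-1}T_{f}(r)}{\log^{[n-1]}T_{k}^{-1}T_{g}(r)}\le \frac{\sigma_{h}^{(m,q)}(f)}{\tau_{k}^{(n,q)}(g)}.
\]
From this I would extract the two inequalities $\underline{\lim}(\cdots)\le \overline{\sigma}_{h}^{(m,q)}(f)/\tau_{k}^{(n,q)}(g)$ and $\overline{\lim}(\cdots)\ge \overline{\sigma}_{h}^{(m,q)}(f)/\tau_{k}^{(n,q)}(g)$.

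Next I would apply Theorem \ref{t14}, which under the same hypotheses gives
\[
\underline{\lim}_{r\to+\infty}\frac{\log^{[m-1]}T_{h}^{-1}T_{f}(r)}{\log^{[n-1]}T_{k}^{-1}T_{g}(r)}\le \frac{\sigma_{h}^{(m,q)}(f)}{\overline{\tau}_{k}^{(n,q)}(g)}\le \overline{\lim}_{r\to+\infty}\frac{\log^{[m-1]}T_{h}^{-1}T_{f}(r)}{\log^{[n-1]}T_{k}^{-1}T_{g}(r)}.
\]
Combining the two upper bounds on $\underline{\lim}(\cdots)$ yields $\underline{\lim}(\cdots)\le \min\{\overline{\sigma}_{h}^{(m,q)}(f)/\tau_{k}^{(n,q)}(g),\,\sigma_{h}^{(m,q)}(f)/\overline{\tau}_{k}^{(n,q)}(g)\}$, and combining the two lower bounds on $\overline{\lim}(\cdots)$ yields $\overline{\lim}(\cdots)\ge \max\{\overline{\sigma}_{h}^{(m,q)}(f)/\tau_{k}^{(n,q)}(g),\,\sigma_{h}^{(m,q)}(f)/\overline{\tau}_{k}^{(n,q)}(g)\}$. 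The trivial relation $\min\le \max$ then supplies the middle inequality, completing the chain.

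There is essentially no obstacle here: once Theorems \ref{t13} and \ref{t14} are in hand, the content of \ref{t15} is purely a bookkeeping step, repackaging the respective bounds involving $\overline{\sigma}/\tau$ and $\sigma/\overline{\tau}$ into a single $\min/\max$ statement. The only thing to double-check is that the hypotheses of both contributing theorems coincide (they do, verbatim), so that both may be applied simultaneously to the same pair $(f,g;h,k)$; for this reason the author is justified in remarking that the proof may be omitted.
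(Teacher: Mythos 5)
Your proposal is correct and matches the route the paper intends: Theorem \ref{t15} is stated without proof precisely because it follows from Theorems \ref{t13} and \ref{t14} by the same bookkeeping that derives Theorem \ref{t5x} from Theorems \ref{t1x} and \ref{t3x}, and your verification that the hypotheses of \ref{t15} imply those of both contributing theorems is the only point that needed checking.
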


\begin{theorem}
\label{t16} Let $f$, $g$ be any two meromorphic functions and $h,$ $k$ be
any two entire functions such that $0<$ $\tau _{h}^{\left( m,q\right)
}\left( f\right) $ $\leq $ $\overline{\tau }_{h}^{\left( m,q\right) }\left(
f\right) $ $<\infty $, $0<$ $\overline{\sigma }_{k}^{\left( n,q\right)
}\left( g\right) $ $\leq $ $\sigma _{k}^{\left( n,q\right) }\left( g\right) $
$<\infty $ and $\lambda _{h}^{\left( m,q\right) }\left( f\right) $ $=$ $\rho
_{k}^{\left( n,q\right) }\left( g\right) $ where $m,n$ and $p$ are any three
positive integers. Then%
\begin{multline*}
\frac{\tau _{h}^{\left( m,q\right) }\left( f\right) }{\sigma _{k}^{\left(
n,q\right) }\left( g\right) }\leq \underset{r\rightarrow +\infty }{%
\underline{\lim }}\frac{\log ^{\left[ m-1\right] }T_{h}^{-1}T_{f}\left(
r\right) }{\log ^{\left[ n-1\right] }T_{k}^{-1}T_{g}\left( r\right) }\leq 
\frac{\tau _{h}^{\left( m,q\right) }\left( f\right) }{\overline{\sigma }%
_{k}^{\left( n,q\right) }\left( g\right) } \\
\leq \underset{r\rightarrow +\infty }{\overline{\lim }}\frac{\log ^{\left[
m-1\right] }T_{h}^{-1}T_{f}\left( r\right) }{\log ^{\left[ n-1\right]
}T_{k}^{-1}T_{g}\left( r\right) }\leq \frac{\overline{\tau }_{h}^{\left(
m,q\right) }\left( f\right) }{\overline{\sigma }_{k}^{\left( n,q\right)
}\left( g\right) }~.
\end{multline*}
\end{theorem}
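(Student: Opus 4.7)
The plan is to mimic the strategies used in the proofs of Theorem \ref{t1x} and Theorem \ref{t13}, combining ``all sufficiently large $r$'' estimates with ``a sequence of $r$ tending to infinity'' estimates extracted from the $\liminf/\limsup$ definitions of the growth indicators, and then exploiting the hypothesis $\lambda_h^{(m,q)}(f)=\rho_k^{(n,q)}(g)$ to cancel the common factor $\bigl[\log^{[q-1]}r\bigr]^{\rho_k^{(n,q)}(g)}$ between numerator and denominator of the ratio under consideration.

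Concretely, for every $\varepsilon>0$ the definitions of $\tau_h^{(m,q)}(f)$ (a $\liminf$) and $\overline{\tau}_h^{(m,q)}(f)$ (a $\limsup$) yield the four estimates
\begin{equation*}
\log^{[m-1]}T_h^{-1}T_f(r)\;\gtreqless\;(\tau_h^{(m,q)}(f)\mp\varepsilon)\bigl[\log^{[q-1]}r\bigr]^{\lambda_h^{(m,q)}(f)},
\end{equation*}
\begin{equation*}
\log^{[m-1]}T_h^{-1}T_f(r)\;\lesseqgtr\;(\overline{\tau}_h^{(m,q)}(f)\pm\varepsilon)\bigl[\log^{[q-1]}r\bigr]^{\lambda_h^{(m,q)}(f)},
\end{equation*}
the first of each pair valid for all sufficiently large $r$ and the second for a suitable sequence of $r$'s; similarly the definitions of $\overline{\sigma}_k^{(n,q)}(g)$ and $\sigma_k^{(n,q)}(g)$ yield the analogous bounds on $\log^{[n-1]}T_k^{-1}T_g(r)$ in terms of $\bigl[\log^{[q-1]}r\bigr]^{\rho_k^{(n,q)}(g)}$. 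I then pair these one at a time exactly as in the proofs of Theorems \ref{t1x} and \ref{t3x}. Specifically, to establish the left-most inequality I combine the ``all large $r$'' lower bound for $\log^{[m-1]}T_h^{-1}T_f(r)$ with the ``all large $r$'' upper bound for $\log^{[n-1]}T_k^{-1}T_g(r)$ (coming from $\sigma_k^{(n,q)}(g)$); for the second inequality I use the sequential upper bound from $\tau_h^{(m,q)}(f)$ together with the ``all large $r$'' lower bound from $\overline{\sigma}_k^{(n,q)}(g)$; for the third I use the ``all large $r$'' lower bound from $\tau_h^{(m,q)}(f)$ with the sequential upper bound from $\overline{\sigma}_k^{(n,q)}(g)$; and for the right-most I use the ``all large $r$'' upper bound from $\overline{\tau}_h^{(m,q)}(f)$ with the ``all large $r$'' lower bound from $\overline{\sigma}_k^{(n,q)}(g)$. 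In each case the hypothesis $\lambda_h^{(m,q)}(f)=\rho_k^{(n,q)}(g)$ makes the power of $\log^{[q-1]}r$ drop out.

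There is no genuine obstacle: every step is routine and identical in spirit to those already carried out in Theorem \ref{t1x}. The only point demanding mild care is bookkeeping -- making sure that for each of the four bounds in the chain the appropriate one of $\{\tau_h^{(m,q)}(f),\overline{\tau}_h^{(m,q)}(f)\}$ and of $\{\overline{\sigma}_k^{(n,q)}(g),\sigma_k^{(n,q)}(g)\}$ is paired with the right ``all large $r$'' or ``along a sequence'' estimate, so that taking the subsequent $\liminf$ or $\limsup$ produces precisely the asserted constant. Letting $\varepsilon\downarrow 0$ in each of the four resulting quotient estimates and assembling the outputs then gives the full chain of inequalities stated in Theorem \ref{t16}.
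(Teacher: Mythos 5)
Your proposal is correct and matches the approach the paper intends: Theorem \ref{t16} is stated without proof precisely because it follows by the same $\varepsilon$-argument as Theorems \ref{t1x} and \ref{t3x}, and your four pairings of ``all sufficiently large $r$'' versus ``along a sequence'' bounds, with the exponent cancellation supplied by $\lambda _{h}^{\left( m,q\right) }\left( f\right) =\rho _{k}^{\left( n,q\right) }\left( g\right) $, are exactly the right ones. Nothing further is needed.
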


\begin{theorem}
\label{t17} Let $f$, $g$ be any two meromorphic functions and $h,$ $k$ be
any two entire functions such that $0<$ $\overline{\tau }_{h}^{\left(
m,q\right) }\left( f\right) $ $<\infty $, $0<$ $\sigma _{k}^{\left(
n,q\right) }\left( g\right) $ $<\infty $ and $\lambda _{h}^{\left(
m,q\right) }\left( f\right) $ $=$ $\rho _{k}^{\left( n,q\right) }\left(
g\right) $ where $m,n$ and $p$ are any three positive integers. Then%
\begin{equation*}
\underset{r\rightarrow +\infty }{\underline{\lim }}\frac{\log ^{\left[ m-1%
\right] }T_{h}^{-1}T_{f}\left( r\right) }{\log ^{\left[ n-1\right]
}T_{k}^{-1}T_{g}\left( r\right) }\leq \frac{\overline{\tau }_{h}^{\left(
m,q\right) }\left( f\right) }{\sigma _{k}^{\left( n,q\right) }\left(
g\right) }\leq \underset{r\rightarrow +\infty }{\overline{\lim }}\frac{\log
^{\left[ m-1\right] }T_{h}^{-1}T_{f}\left( r\right) }{\log ^{\left[ n-1%
\right] }T_{k}^{-1}T_{g}\left( r\right) }~.
\end{equation*}
\end{theorem}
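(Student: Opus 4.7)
The plan is to follow the same four-inequality template already established in Theorem \ref{t3x} and Theorem \ref{t14}, this time pairing the weak-type indicator $\overline{\tau}_h^{\left(m,q\right)}\left(f\right)$ (a $\limsup$ normalised by $\left[\log^{\left[q-1\right]}r\right]^{\lambda_h^{\left(m,q\right)}\left(f\right)}$) with the type $\sigma_k^{\left(n,q\right)}\left(g\right)$ (a $\limsup$ normalised by $\left[\log^{\left[q-1\right]}r\right]^{\rho_k^{\left(n,q\right)}\left(g\right)}$). The hypothesis $\lambda_h^{\left(m,q\right)}\left(f\right)=\rho_k^{\left(n,q\right)}\left(g\right)$ is exactly what is needed to cancel the two normalising powers of $\log^{\left[q-1\right]}r$ when one forms the ratio $\log^{\left[m-1\right]}T_h^{-1}T_f\left(r\right)/\log^{\left[n-1\right]}T_k^{-1}T_g\left(r\right)$.

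For the left-hand inequality I would begin from the universal upper estimate furnished by $\overline{\tau}_h^{\left(m,q\right)}\left(f\right)$ being a limit superior: for every $\varepsilon>0$ and all sufficiently large $r$, $\log^{\left[m-1\right]}T_h^{-1}T_f\left(r\right)\le\bigl(\overline{\tau}_h^{\left(m,q\right)}\left(f\right)+\varepsilon\bigr)\left[\log^{\left[q-1\right]}r\right]^{\lambda_h^{\left(m,q\right)}\left(f\right)}$. Since $\sigma_k^{\left(n,q\right)}\left(g\right)$ is also a limit superior, its definition yields a sequence of values of $r$ tending to infinity along which $\log^{\left[n-1\right]}T_k^{-1}T_g\left(r\right)\ge\bigl(\sigma_k^{\left(n,q\right)}\left(g\right)-\varepsilon\bigr)\left[\log^{\left[q-1\right]}r\right]^{\rho_k^{\left(n,q\right)}\left(g\right)}$. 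Dividing on this sequence, invoking $\lambda_h^{\left(m,q\right)}\left(f\right)=\rho_k^{\left(n,q\right)}\left(g\right)$ to cancel, and letting $\varepsilon\downarrow 0$ delivers $\underline{\lim}\le \overline{\tau}_h^{\left(m,q\right)}\left(f\right)/\sigma_k^{\left(n,q\right)}\left(g\right)$.

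For the right-hand inequality I would reverse the choice: take the universal upper bound $\log^{\left[n-1\right]}T_k^{-1}T_g\left(r\right)\le\bigl(\sigma_k^{\left(n,q\right)}\left(g\right)+\varepsilon\bigr)\left[\log^{\left[q-1\right]}r\right]^{\rho_k^{\left(n,q\right)}\left(g\right)}$ valid for all large $r$, together with the sequential lower bound $\log^{\left[m-1\right]}T_h^{-1}T_f\left(r\right)\ge\bigl(\overline{\tau}_h^{\left(m,q\right)}\left(f\right)-\varepsilon\bigr)\left[\log^{\left[q-1\right]}r\right]^{\lambda_h^{\left(m,q\right)}\left(f\right)}$ holding on a sequence on which the $\limsup$ defining $\overline{\tau}_h^{\left(m,q\right)}\left(f\right)$ is approached. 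The same cancellation using $\lambda_h^{\left(m,q\right)}\left(f\right)=\rho_k^{\left(n,q\right)}\left(g\right)$, followed by $\varepsilon\downarrow 0$, gives $\overline{\tau}_h^{\left(m,q\right)}\left(f\right)/\sigma_k^{\left(n,q\right)}\left(g\right)\le\overline{\lim}$.

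No genuine obstacle is anticipated, as the argument is an almost verbatim transcription of the pattern already carried out in Theorems \ref{t1x}, \ref{t3x} and \ref{t14}. The only point needing attention is the bookkeeping of which of the two defining inequalities (the universal one, valid for all large $r$, or the sequential one, approaching the $\limsup$) is used in the numerator versus the denominator on each side; here both quantities $\overline{\tau}_h^{\left(m,q\right)}\left(f\right)$ and $\sigma_k^{\left(n,q\right)}\left(g\right)$ are $\limsup$s, so only two of the four inequality combinations give a nondegenerate bound, and these are precisely the two inequalities claimed in the statement.
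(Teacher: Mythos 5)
Your proposal is correct: the paper omits the proof of this theorem, but the argument it clearly intends is exactly the template of Theorems \ref{t1x} and \ref{t3x}, and you reproduce it faithfully, pairing the universal upper bound from one $\limsup$-type indicator with the sequential lower bound from the other and using $\lambda _{h}^{\left( m,q\right) }\left( f\right) =\rho _{k}^{\left( n,q\right) }\left( g\right) $ to cancel the powers of $\log ^{\left[ q-1\right] }r$. Your closing remark about which combinations of universal versus sequential estimates yield the two nondegenerate bounds is also the right bookkeeping.
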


\begin{theorem}
\label{t18} Let $f$, $g$ be any two meromorphic functions and $h,$ $k$ be
any two entire functions such that $0<$ $\tau _{h}^{\left( m,q\right)
}\left( f\right) $ $\leq $ $\overline{\tau }_{h}^{\left( m,q\right) }\left(
f\right) $ $<\infty $, $0<$ $\overline{\sigma }_{k}^{\left( n,q\right)
}\left( g\right) $ $\leq $ $\sigma _{k}^{\left( n,q\right) }\left( g\right) $
$<\infty $ and $\lambda _{h}^{\left( m,q\right) }\left( f\right) $ $=$ $\rho
_{k}^{\left( n,q\right) }\left( g\right) $ where $m,n$ and $p$ are any three
positive integers. Then%
\begin{multline*}
\underset{r\rightarrow +\infty }{\underline{\lim }}\frac{\log ^{\left[ m-1%
\right] }T_{h}^{-1}T_{f}\left( r\right) }{\log ^{\left[ n-1\right]
}T_{k}^{-1}T_{g}\left( r\right) }\leq \min \left\{ \frac{\tau _{h}^{\left(
m,q\right) }\left( f\right) }{\overline{\sigma }_{k}^{\left( n,q\right)
}\left( g\right) },\frac{\overline{\tau }_{h}^{\left( m,q\right) }\left(
f\right) }{\sigma _{k}^{\left( n,q\right) }\left( g\right) }\right\} \\
\leq \max \left\{ \frac{\tau _{h}^{\left( m,q\right) }\left( f\right) }{%
\overline{\sigma }_{k}^{\left( n,q\right) }\left( g\right) },\frac{\overline{%
\tau }_{h}^{\left( m,q\right) }\left( f\right) }{\sigma _{k}^{\left(
n,q\right) }\left( g\right) }\right\} \leq \underset{r\rightarrow +\infty }{%
\overline{\lim }}\frac{\log ^{\left[ m-1\right] }T_{h}^{-1}T_{f}\left(
r\right) }{\log ^{\left[ n-1\right] }T_{k}^{-1}T_{g}\left( r\right) }~.
\end{multline*}
\end{theorem}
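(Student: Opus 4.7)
The plan is to obtain Theorem \ref{t18} by directly combining the conclusions of Theorem \ref{t16} and Theorem \ref{t17}, both of which apply under exactly the same hypotheses as Theorem \ref{t18}. Observe that the assumption $\lambda_h^{(m,q)}(f) = \rho_k^{(n,q)}(g)$ together with the finiteness and positivity of the weak-type and type quantities is precisely what is required to invoke Theorem \ref{t16}; similarly, Theorem \ref{t17} requires only $0 < \overline{\tau}_h^{(m,q)}(f) < \infty$, $0 < \sigma_k^{(n,q)}(g) < \infty$ and the same equality of lower-order and order, all of which are implied by the present hypotheses. So no fresh estimation of $\log^{[m-1]} T_h^{-1}T_f(r)$ or $\log^{[n-1]} T_k^{-1}T_g(r)$ is needed; I will simply quote the conclusions of those two theorems.

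First, from Theorem \ref{t16} I read off the two inequalities
\[
\underset{r\rightarrow +\infty}{\underline{\lim}} \frac{\log^{[m-1]} T_h^{-1}T_f(r)}{\log^{[n-1]} T_k^{-1}T_g(r)} \leq \frac{\tau_h^{(m,q)}(f)}{\overline{\sigma}_k^{(n,q)}(g)} \leq \underset{r\rightarrow +\infty}{\overline{\lim}} \frac{\log^{[m-1]} T_h^{-1}T_f(r)}{\log^{[n-1]} T_k^{-1}T_g(r)}.
\]
Next, from Theorem \ref{t17} I read off
\[
\underset{r\rightarrow +\infty}{\underline{\lim}} \frac{\log^{[m-1]} T_h^{-1}T_f(r)}{\log^{[n-1]} T_k^{-1}T_g(r)} \leq \frac{\overline{\tau}_h^{(m,q)}(f)}{\sigma_k^{(n,q)}(g)} \leq \underset{r\rightarrow +\infty}{\overline{\lim}} \frac{\log^{[m-1]} T_h^{-1}T_f(r)}{\log^{[n-1]} T_k^{-1}T_g(r)}.
\]

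Finally I will combine these: since the liminf is a common lower bound on both fractions, it is bounded above by their minimum, and since the limsup is a common upper bound on both fractions, it is bounded below by their maximum. Interpolating the trivial inequality $\min\{\cdot,\cdot\} \leq \max\{\cdot,\cdot\}$ between them produces the complete chain in the statement of Theorem \ref{t18}. Since every step is an immediate logical consequence of the two preceding results, there is really no obstacle at all; the only point requiring even nominal care is to verify that the hypotheses of both Theorem \ref{t16} and Theorem \ref{t17} are genuinely present, which they are, since $\tau_h^{(m,q)}(f)\le\overline{\tau}_h^{(m,q)}(f)<\infty$ and $\overline{\sigma}_k^{(n,q)}(g)\le\sigma_k^{(n,q)}(g)<\infty$ together with positivity give all four quantities finite and positive. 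This is why the authors simply state the result without reproducing a proof.
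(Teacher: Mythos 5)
Your proposal is correct: the hypotheses of Theorem \ref{t18} do subsume those of Theorems \ref{t16} and \ref{t17}, and taking the minimum of the two upper bounds on the lower limit and the maximum of the two lower bounds on the upper limit yields exactly the stated chain. This is precisely the argument the paper intends but omits, in direct analogy with Theorem \ref{t5} being deduced from Theorems \ref{t1} and \ref{t3}, and Theorem \ref{t5x} from Theorems \ref{t1x} and \ref{t3x}.
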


\begin{remark}
The same results of above theorems in terms of Maximum modulus of entire
functions can also be deduced if we consider $f$ and $g$ be any two entire
functions.
\end{remark}

\end{document}